\newcommand{\PfAlt}[1]{{\bfseries Proof of #1.}}
\DeclareMathOperator*{\argmin}{arg\,min}
\newcommand{\Norm}[2]{\norm{#2}_{#1}}
\newcommand{\pnorm}[2]{\norm{#2}_{#1}}
\newcommand{\wnorm}[2]{\norm{#2}_{\Seq{w},#1}}
\newcommand{\OPnorm}[1]{\norm{#1}_{\mathrm{op}}}
\newcommand{\pOPnorm}[2]{\norm{#2}_{\mathrm{op},#1}}
\newcommand{\CTI}{\mathrm{CTI}(\ZS^{\pm})}
\newcommand{\TI}{\mathrm{TI}(\ZS^-)}
\newcommand{\Func}{\Hc(\ZS^-)}
\title{Fading memory and the convolution theorem}
\author{Juan-Pablo Ortega\footnote{Division of Mathematical Sciences, School of Physical and Mathematical Sciences, Nanyang Technological University, Singapore} \qquad Florian Rossmannek$^*$}
\begin{document}

\maketitle

\begin{abstract}
	Several topological and analytical notions of continuity and fading memory for causal and time-invariant filters are introduced, and the relations between them are analyzed.
A significant generalization of the convolution theorem that establishes the equivalence between the fading memory property and the availability of convolution representations of linear filters is proved.
This result extends a previous similar characterization to a complete array of weighted norms in the definition of the fading memory property.
Additionally, the main theorem shows that the availability of convolution representations can be characterized, at least when the codomain is finite-dimensional, not only by the fading memory property but also by the reunion of two purely topological notions that are called minimal continuity and minimal fading memory property.
Finally, when the input space and the codomain of a linear functional are Hilbert spaces, it is shown that minimal continuity and the minimal fading memory property guarantee the existence of interesting embeddings of the associated reproducing kernel Hilbert spaces.
\end{abstract}

\keys{approximation theory, convolution theorem, duality, fading memory property, kernel learning, linear I/O system, linear control, representer theorem, system identification.}


\section{Introduction}

Fading memory is a natural modeling assumption that has played a key role in approximation and realization results that are central in control, signal treatment, filtering, and systems theory.
Given a causal and time-invariant input/output system, the various notions of fading memory in the literature intended to mathematically encode the idea that the dependence of an output on a past input fades out as this input is further and further into the past.
Behavior of this type is expected in the description of many phenomena in the natural and social sciences and is formulated as such in classical works that go back to Volterra \cite{Volterra1959} and Wiener \cite{Wiener1958}.

In systems theory, linear time-invariant (LTI) systems constitute an important and much-used class.
Historically, it was always informally assumed that the response of such a system is always a convolution of the impulse.
This modeling assumption is crucial for traditional impulse-response (IR) analyses in filter estimation.
Even though this assumption is valid when the input dependence is defined on finite time intervals, it was pointed out for the first time in \cite{BoydChua1985} that it is actually false, in general, when those intervals are infinite.
Indeed, the availability of a convolution representation is tied to the idea that the system's filter can be estimated from a finite number of IR-samples.
However, for an LTI system defined on an infinite time interval, if the output depends on the infinite past of its input, then it is impossible to estimate its filter using only a finite number of IR-samples.
To resolve this issue, the foundational work \cite{BoydChua1985} provided the first mathematical treatment of this classical assumption, proving that the availability of a convolution representation is, in fact, linked to the idea that the system exhibits fading memory.
This statement is called the {\bfi convolution theorem}.\footnote{
This theorem should not be mistaken with the result on the Fourier transform of the convolution of two functions.}
On the one hand, convolution representations are the modeling assumption that enables a mathematical treatment of IR-analysis.
On the other hand, fading memory is a concept that speaks to us on an intuitive level and which most physical input/output systems ought to possess \cite{Sepulchre2021} when relating present outputs with inputs distant in the past.
The LTI system mentioned above, in which the output depends on the infinite past of the input, clearly violates the intuition of fading memory.
It was Boyd and Chua \cite{BoydChua1985} who gave this intuition a mathematically precise description.
This description is based on a certain analytically defined notion of continuity.
One may wonder why this specific technical definition captures exactly our intuition of the concept of fading memory.
The present work deepens our understanding of various mathematical implementations of the concept of fading memory.
In particular, we introduce a mathematical treatment of fading memory that is closer to our intuition.
A more comprehensive analysis of the relationship to convolution representations is conducted and \textit{a significant generalization of the convolution theorem constitutes the main contribution of the this work.}

Classically, systems theory has often been concerned with state-space models \cite{HoKalman1966, Gevers2006, Ljung2010}.
With the rise of machine learning, kernel methods entered the field as a new player \cite{DeNicolaoPillonetto2008, PillonettoDeNicolao2010, PillonettoQuangChiuso2011, PillonettoEtal2014, Dinuzzo2015} presenting a new method for bypassing the need for state-space representations.
One of the biggest upshots of kernel methods is the representer theorem \cite{ChristStein2008}, which transforms infinite-dimensional optimization problems into tractable finite-dimensional ones.
The theory of kernels and their associated reproducing kernel Hilbert spaces (RKHSs) was introduced to the field some time ago but has received renewed popularity in recent years \cite{LjungChenMu2020, vanWaardeSepulchre2023, HuoChaffeySepulchre2024}.
However, as noted in \cite{vanWaardeSepulchre2023}: ``the use of kernel-based system identification in system analysis and control design has so far remained limited because it faces the same challenge as machine learning: identified models can serve control design only if they come with suitable input-output properties.''
To mitigate this issue, van Waarde and Sepulchre \cite{vanWaardeSepulchre2023} studied incremental integral quadratic constraints, which are a tool to study the robustness of a system \cite{MegretskiRantzer1997, KulkarniSafonov2002}.
Like conventional analyses, many kernel methods for LTI systems are built on the premise that convolution representations are available.
By the convolution theorem, the LTI systems for which kernel methods are applicable are thus systems with fading memory, as noted in \cite{HuoChaffeySepulchre2024}.
The deepened understanding of these systems obtained in our generalized convolution theorem uncovers more of the desired input/output properties \cite{Sepulchre2021} and leads to new theoretical insights about kernel methods and the RKHSs governed by the system's fading memory properties, the second contribution of this work.

Beyond system identification, linear filters and their convolution representations have recently regained significant attention with the rise of state-space models as an alternative to the Transformer model \cite{VaswaniEtal2017} for the machine learning objective of long sequence modeling.
Various enhancements of state-space models have enabled them to overcome previous challenges and achieve competitive results in different benchmarks \cite{PatroAgneeswaran2024b}.
Among the various models that have been proposed, Mamba \cite{GuDao2023} and subsequent models based on it, such as SiMBA \cite{PatroAgneeswaran2024a}, have gained popularity due to their remarkable balance of performance and efficiency.
The core of these models is a linear state-space system that is typically assumed to admit a convolution representation to permit both a linear recurrent and a faster kernel-based implementation.
Our work supports these models by providing a theoretical background for the underlying framework.

In the context of classical dynamical systems theory, hypotheses on the memory behavior of certain systems have been central in formulating approximation results for systems defined on infinite time intervals.
This was done for the first time in the seminal paper \cite{BoydChua1985}, which extended to infinite time intervals a classical theorem \cite{Rugh1981} on the uniform approximation abilities of Volterra series for functions defined on finite time intervals.
In a similar spirit, \cite{RC6} showed that any (nonlinear!) fading memory filter can be approximated by a linear(!) system endowed with a polynomial readout.
Thus, LTI systems play a central role even in the study of general nonlinear filters, which warrants us to revisit the convolution theorem.
Similar approximation results have been obtained for recurrent neural networks assuming first finite memory \cite{Matthews1992, Matthews1993, Perryman1996, Sandberg1991IEEE, Sandberg1991MdSSP, StubberudPerryman1997} and, later on, fading memory \cite{RC7, RC12, RC20}.
Other results in connection with the universal approximation properties of recurrent neural systems that also identify fading memory as a key hypothesis are \cite{MaassJoshiSontag2007, MaassNatschMarkram2002, MaassNatschMarkram2004, MaassSontag2000}.
In a nutshell, the main mathematical feature that fading memory brings in its wake is that uniformly bounded semi-infinite input sequences form a compact space with respect to the topology induced by the weighted norms used to define fading memory despite the infinite dimensionality of the corresponding space.
This is a key fact needed in classical statements like the Stone-Weierstrass theorem, which is at the core of many universal approximation results.
Additionally, fading memory can also be found in the literature in relation to the availability and construction of temporal extensions of filters \cite{Borys2015, ColemanMizel1968, Sandberg2001, Sandberg2003} and the stability of certain differential equations \cite{ZangIglesias2004}.

The contents of this paper are divided into three sections.
Section \ref{section_fmp_notions} introduces various possible topological and analytical notions of fading memory that generalize the original definition in \cite{BoydChua1985} and spells out the relations between them.
Section \ref{section_linear_fmp_functionals} is the core of the paper and studies the interplay, in the context of linear functionals, between the availability of what are called formal and proper convolution representations and various types of fading memory introduced with different weighted norms.
In particular, Theorem \ref{wFMP_fin_dim} contains a significant generalization of the convolution theorem introduced in \cite{BoydChua1985} for linear functionals that have a fading memory property defined using weighted $\ell^p$-norms with a fixed $p \in [1, \infty]$.
We recall that the original result \cite[Thm.\ 5]{BoydChua1985} was only proved for the case $p=\infty $ and a 1-D domain.
Additionally, our theorem shows that the availability of convolution representations can be characterized, at least when the codomain is finite-dimensional, not only by the fading memory property but also by the reunion of two purely topological notions that we shall denominate as minimal continuity and minimal fading memory property.
A stronger type of fading memory is shown to be equivalent to finite memory for linear functionals.
One of the main messages of Section \ref{section_linear_fmp_functionals} is that one ought to think carefully about how to define fading memory in one's context and that even the domain of the functional under consideration can have a significant effect.
Finally, Section \ref{section_kernel_learning} considers the case in which the input space $\Zc$ and the codomain of linear functionals are defined on Hilbert spaces and studies the natural RKHSs determined by those functionals.
The minimal continuity and minimal fading memory property introduced in the previous section appear here as conditions that allow the embedding of the RKHS in $\ell^2(\Zc)^*$.
Finally, kernel learning of linear functionals is linked to direct kernel learning of convolution representations.
Proofs and technical results can be found in the appendices.


\section{Notions of fading memory}
\label{section_fmp_notions}

Several of the results in this paper will be developed for general, possibly infinite-dimensional vector spaces.
The reader is invited to think of the Euclidean cases $\R$ or $\R^d$ for clarity.
Suppose $(\Zc,\norm{\cdot})$ and $(\Yc,\Norm{\Yc}{\cdot})$ are normed vector spaces over $\R$.
Let $\Bc = \{ z \in \Zc \colon \norm{z} \leq 1 \}$ be the closed unit ball in $\Zc$.
Denote by $\N = \{1,2,\dots\}$ the set of strictly positive integers and by $\Z_- = \Z \backslash \N$ the set of non-positive integers.
Underlined symbols like $\Seq{z}$ denote sequences in $\Zc^{\Z_-} $, and $\seq{z}{t} \in \Zc$ the $t$-th entry, $t \in \mathbb{Z}_{-} $, of $\Seq{z}$. 
For any $t \in \Z_-$, let $\delta^t \colon \Zc \rightarrow \Zc^{\Z_-}$ be the {\bfi inclusion} $\delta^t(z) = (\dots,0,z,0,\dots)$ that inserts $z \in \Zc$ at the time entry $t$ into the zero vector.

\begin{assumption}
\label{asmpt_sequ_space}
	Throughout, we fix a subset $\ZS \subseteq \Zc^{\Z_-}$ with the following properties.
\begin{enumerate}[\upshape (i)]\itemsep=0em
\item
The set $\ZS$ is convex and symmetric; the latter meaning $\ZS = \{ -\Seq{z} \colon \Seq{z} \in \ZS \}$.

\item
The set $\ZS$ contains the inclusions of the unit ball $\bigcup_{t \leq 0} \delta^t(\Bc)$.

\item
For any $\Seq{z} \in \ZS$ and any subset $J \subseteq \Z_-$, the set $\ZS$ contains the sequence $\sum_{t \in J} \delta^t(\seq{z}{t})$, in which $\seq{z}{t}$ is replaced by 0 for all $t \in \Z_- \backslash J$.
\end{enumerate}
\end{assumption}

Historically, the main objects of interest have been time-invariant {\bfi filters} $\ZS \rightarrow \Yc^{\Z_-}$ with a shift-invariant domain.
We recall that such filters are in a 1:1 correspondence with {\bfi functionals} $\ZS \rightarrow \Yc$.
This equivalence justifies that we restrict our attention to functionals throughout.
Details on this equivalence can be found in Appendix \ref{appendix_funfil}.
We emphasize that the denomination `functional' is sometimes reserved for the case $\Yc = \R$.
In that sense, the maps that we call functional in this paper could also be considered generalized functionals.

We now study the implications of continuity of a functional $H \colon \ZS \rightarrow \Yc$ when the domain $\ZS$ is equipped with various interesting topologies.
With respect to any of these topologies, the inclusions $\delta^t \colon \Bc \rightarrow \ZS$ of the unit ball become continuous.
Thus, the following notion is the common denominator of the various notions of continuity of $H$ that we will encounter.

\begin{definition}
	A functional $H \colon \ZS \rightarrow \Yc$ is said to be {\bfi minimally continuous} if $H \circ \delta^t \colon \Bc \rightarrow \Yc$ is continuous for any $t \in \Z_-$.
\end{definition}

Our first notion of fading memory is designed to capture the basic property that a functional should possess to qualify for such a denomination.
This notion is designed to closely resemble our intuitive understanding of fading memory, namely ``that the effect of the distant past should fade away'' \cite{Sepulchre2021} or, from the perspective of IR-analyses, that an LTI system can be estimated from a finite number of IR-samples.

\begin{definition}
	A functional $H \colon \ZS \rightarrow \Yc$ is said to have the {\bfi minimal fading memory property (minimal FMP)} if $H(\sum_{t = T}^0 \delta^t(\seq{z}{t}))$ converges to $H(\Seq{z})$ as $T \rightarrow -\infty$ for any $\Seq{z} \in \ZS$.
\end{definition}

The sequence $\sum_{t = T}^0 \delta^t(\seq{z}{t})$ appearing in the definition of the minimal FMP is the one that is constantly zero up to time $T-1$ and agrees with $\Seq{z}$ from time $T$ onwards.
The minimal FMP is similar to the so-called {\bfi input forgetting property} (also known as the {\bfi unique steady-state property}) \cite{BoydChua1985, Jaeger2010, RC9}.
However, these properties are not the same, and neither one implies the other in general.
Compared to other natural notions of fading memory that we introduce below, the minimal FMP is arguably a weak property.
However, in the context of linear functionals, the minimal FMP will turn out to be surprisingly strong.

\begin{remark}
	It is customary to define the fading memory of a functional as being continuous with respect to a certain topology.
The minimal FMP can alternatively be defined in such a way.
Let us say a topology on $\ZS$ has property (P) if for any $\Seq{z} \in \ZS$ the sequence $\sum_{t = T}^0 \delta^t(\seq{z}{t})$ converges to $\Seq{z}$ with respect to this topology as $T \rightarrow -\infty$.
Given a functional $H \colon \ZS \rightarrow \Yc$, the following are equivalent.
(i)
$H$ has the minimal FMP.
(ii)
The initial topology on $\ZS$ induced by $H$ has property (P).
(iii)
$H$ is continuous with respect to some topology on $\ZS$ that has property (P).
\end{remark}

The next notion of fading memory has been used in \cite[Sect.\ IX]{BoydChua1985}, but only in the context of approximation and not in the context of the convolution theorem, a result at the core of this paper that is later on presented as Theorem \ref{wFMP_fin_dim}.

\begin{definition}
	A functional $H \colon \ZS \rightarrow \Yc$ is said to have the {\bfi product fading memory property (product FMP)} if it is continuous with respect to the subspace product topology on $\ZS$.
\end{definition}

The minimal and the product FMP are topological notions.
Nevertheless, the definition of fading memory has been historically introduced as continuity with respect to weighted $\ell^p$-norms \cite{BoydChua1985, RC9}, which is an analytical notion, and which we now recall.
Given $p \in [1,\infty]$ and a sequence $\Seq{w} \in (0,1]^{\Z_-}$, let $\ell^p_{\Seq{w}}(\Zc) \subseteq \Zc^{\Z_-}$ be the set of all sequences in $\Zc$ with $\wnorm{p}{\Seq{z}} = ( \sum_{t \leq 0} \seq{w}{t} \norm{\seq{z}{t}}^p )^{1/p} < \infty$ if $p < \infty$, respectively $\wnorm{\infty}{\Seq{z}} = \sup_{t \leq 0} \seq{w}{t} \norm{\seq{z}{t}} < \infty$.
This makes $(\ell^p_{\Seq{w}}(\Zc),\wnorm{p}{\cdot})$ normed vector spaces.
Taking $\Seq{w}$ to be constantly 1, we obtain the usual sequence spaces $(\ell^p(\Zc),\pnorm{p}{\cdot})$.
The closed unit ball in $\ell^p(\Zc)$ is denoted $\B^p(\Zc) = \{ \Seq{z} \in \ell^p(\Zc) \colon \pnorm{p}{\Seq{z}} \leq 1 \}$.
The set $c_0(\Zc) \subseteq \Zc^{\Z_-}$ contains all sequences vanishing at minus infinity.
We call $\Seq{w} \in (0,1]^{\Z_-}$ a {\bfi weighting sequence} if it is monotone and $\lim_{t \rightarrow -\infty} \seq{w}{t} = 0$.

\begin{definition}
	Let $p \in [1,\infty]$.
A functional $H \colon \ZS \rightarrow \Yc$ is said to be {\bfi $p$-continuous} if $\ZS \subseteq \ell^p(\Zc)$ and $H$ is continuous with respect to the topology induced by $\pnorm{p}{\cdot}$.
A functional $H \colon \ZS \rightarrow \Yc$ is said to have the {\bfi $p$-weighted fading memory property ($p$-weighted FMP)} if $\ZS \subseteq \ell^p(\Zc)$ and there exists a weighting sequence $\Seq{w}$ such that $H$ is continuous with respect to the topology induced by $\wnorm{p}{\cdot}$.
\end{definition}

The fading memory appearing in the pioneering work \cite{BoydChua1985} is the $\infty$-weighted FMP.
Many works in the literature adopt the same notion of fading memory (see \cite{Jaeger2010, JiangLiLiWang2023JML, Matthews1992, Matthews1993, RC7, RC9, RC16} to name just a few).
One of the first works to consider the $p$-weighted FMP for $p \neq \infty$ was \cite{RC9}.
We point out that most of these works assume the underlying space $\Zc$ to be compact.
In this case, the product FMP and the $\infty$-weighted FMP are equivalent \cite{RC7, RC9}.
When $\Zc$ is not compact, the distinction becomes important.
Although $\Zc$ is not compact in this work, we have flexibility in choosing the subset $\ZS$ of $\Zc^{\Z_-}$.
In particular, given a compact set $K \subseteq \Zc$, we could consider $\ZS \subseteq K^{\Z_-}$, effectively recovering the setups of other works.

\begin{remark}
\label{FMP_comparison}
	Consider a filter $\ZS \rightarrow \Yc$.
The product FMP implies both the minimal FMP and minimal continuity.
Furthermore, assuming $\ZS \subseteq \ell^p(\Zc)$ for a given $p \in [1,\infty]$, the following implications hold.
\begin{equation*}
\begin{tikzcd}[row sep = 2em, column sep = 4em]
	\text{product FMP} \arrow[shift left]{d} \\
	\arrow[shift left]{u}{\ZS \subseteq \B^{\infty}(\Zc)} \infty\text{-weighted FMP} \arrow{r} \arrow{d} \pgfmatrixnextcell \infty\text{-continuity} \arrow{d} \\
	p\text{-weighted FMP} \arrow{r} \arrow{d}{} \pgfmatrixnextcell p\text{-continuity} \arrow{d} \arrow["\ZS \subseteq c_0(\Zc)" description]{dl} \\
	\text{minimal FMP} \pgfmatrixnextcell \text{minimal cont.}
\end{tikzcd}
\end{equation*}
\end{remark}


\section{Linear fading memory functionals}
\label{section_linear_fmp_functionals}

\subsection{Main result}
\label{section_main}

We point out that the set $\ZS$ need not be a linear subspace of $\Zc^{\Z_-}$.
We call a functional $H \colon \ZS \rightarrow \Yc$ {\bfi linear} if it is the restriction of a (necessarily unique) linear functional defined on the linear span of $\ZS$.
Let $(L(\Zc,\Yc),\OPnorm{\cdot})$ be the space of continuous linear functions $\Zc \rightarrow \Yc$ with the operator norm induced by $\norm{\cdot}$ and $\Norm{\Yc}{\cdot}$.
For any $q \in [1,\infty)$ and $\Seq{\kappa} \in L(\Zc,\Yc)^{\Z_-}$, denote $\pOPnorm{q}{\Seq{\kappa}} = (\sum_{t \leq 0} \OPnorm{\seq{\kappa}{t}}^q)^{1/q}$ and $\pOPnorm{\infty}{\Seq{\kappa}} = \sup_{t \leq 0} \OPnorm{\seq{\kappa}{t}}$.

\begin{definition}
	We say that a functional $H \colon \ZS \rightarrow \Yc$ has a {\bfi formal convolution representation} if there exists a sequence $\Seq{\kappa} \in L(\Zc,\Yc)^{\Z_-}$ such that for all $\Seq{z} \in \ZS$ we have
\begin{equation}
\label{CR}
	H(\Seq{z})
	= \lim_{T \rightarrow -\infty} \sum_{t=T}^0 \seq{\kappa}{t}(\seq{z}{t}).
\end{equation}
We say that $H$ has a {\bfi convolution representation} or {\bfi proper convolution representation} if the series in \eqref{CR} converges absolutely for all $\Seq{z} \in \ZS$.
We say that $H$ has {\bfi finite memory} if $\seq{\kappa}{t}$ is zero for all but at most finitely many $t \in \Z_-$.
\end{definition}

Note that a functional with a formal convolution representation must be linear and that $\Seq{\kappa}$ must be unique.
Part (i) of \cref{lem_CR} motivates our introduction of the new notions of minimal continuity and minimal FMP, and it gives us a taste of how strong these seemingly weak notions turn out to be for linear functionals.
Part (ii) clarifies that the distinction between formal and proper convolution representations is irrelevant if the codomain $\Yc$ is finite-dimensional.

\begin{lemma}
\label{lem_CR}
	Consider a linear functional $H \colon \ZS \rightarrow \Yc$.
\begin{enumerate}[\upshape (i)]\itemsep=0em
\item
$H$ has a formal convolution representation if and only if it has the minimal FMP and is minimally continuous.

\item
If $\Yc$ is finite-dimensional, then $H$ has a formal convolution representation if and only if it has a proper convolution representation.
\end{enumerate}
\end{lemma}

Next, we present our main result -- a significant generalization of the original convolution theorem \cite[Thm.\ 5]{BoydChua1985}.
In the formulation of Theorem \ref{wFMP_fin_dim}, the minimal FMP and minimal continuity may be substituted by having a convolution representation as shown in the previous lemma.

\begin{theorem}[{\bf Convolution Theorem}]
\label{wFMP_fin_dim}
	Suppose $\Yc$ is finite-dimensional.
Let $p \in [1,\infty]$, and suppose that $c_0(\Zc) \cap \B^p(\Zc) \subseteq \ZS \subseteq  \ell^p(\Zc)$.
Consider a linear functional $H \colon \ZS \rightarrow \Yc$.
\begin{enumerate}[\upshape (i)]\itemsep=0em
\item
Suppose $p = 1$.
Then, $H$ has the 1-weighted FMP if and only if it has a convolution representation satisfying $\lim_{t \rightarrow -\infty} \OPnorm{\seq{\kappa}{t}} = 0$.
Furthermore, $H$ is 1-continuous if and only if it has a convolution representation satisfying $\pOPnorm{\infty}{\Seq{\kappa}} < \infty$ if and only if it has the minimal FMP and is minimally continuous.

\item
Suppose $p \in (1,\infty)$, and let $q \in (1,\infty)$ be the H{\"o}lder conjugate of $p$.
Then, $H$ has the $p$-weighted FMP if and only if it is $p$-continuous if and only if it has a convolution representation satisfying $\pOPnorm{q}{\Seq{\kappa}} < \infty$ if and only if it has the minimal FMP and is minimally continuous.

\item
Suppose $p = \infty$.
Then, $H$ has the $\infty$-weighted FMP if and only if if it has a convolution representation satisfying $\pOPnorm{1}{\Seq{\kappa}} < \infty$ if and only if it has the minimal FMP and is minimally continuous.
Furthermore, if $\ZS \subseteq c_0(\Zc)$, then $H$ has the $\infty$-weighted FMP if and only if it is $\infty$-continuous.
\end{enumerate}
\end{theorem}

If $c_0(\Zc) \cap \B^{\infty}(\Zc) \subseteq \ZS \subseteq \B^{\infty}(\Zc)$, then the product FMP admits the same characterization Theorem \ref{wFMP_fin_dim}.(iii) as the $\infty$-weighted FMP by Remark \ref{FMP_comparison}.
If the domain $\ZS$ is larger, then the product FMP is a strictly stronger notion than the $\infty$-weighted FMP.
In this case, the product FMP turns out to be equivalent to having finite memory.
Let $c_{00}(\Zc) \subseteq \Zc^{\Z_-}$ denote the set of all sequences with at most finitely many non-zero entries.

\begin{proposition}
\label{productFMP_fin_dim}
	Suppose $\Yc$ is finite-dimensional and $c_{00}(\Zc) \subseteq \ZS$.
Consider a linear functional $H \colon \ZS \rightarrow \Yc$.
\begin{enumerate}[\upshape (i)]\itemsep=0em
\item
The functional $H$ has the product FMP if and only if it has finite memory.

\item
If $\ZS = \Zc^{\Z_-}$, then $H$ has the product FMP if and only if it has finite memory if and only if it has the minimal FMP and is minimally continuous.
\end{enumerate}
\end{proposition}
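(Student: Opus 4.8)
The plan is to use that the subspace product topology on $\ZS$ is precisely the topology of coordinate-wise convergence, and that it is metrizable since $\Z_-$ is countable and $\Zc$ is a metric space, so that continuity of $H$ can be tested along sequences. The easy direction of (i) is then immediate: if $H$ has finite memory, then $H(\Seq{z}) = \sum_{t \in F} \seq{\kappa}{t}(\seq{z}{t})$ for a finite set $F \subseteq \Z_-$, which is a finite sum of coordinate evaluations composed with the continuous maps $\seq{\kappa}{t}$ and hence continuous for the product topology. For the remaining implications I would first pass to a convolution representation: by \cref{FMP_comparison} the product FMP implies the minimal FMP and minimal continuity, so \cref{FMP_to_CR_inf_dim} yields a formal convolution representation $\Seq{\kappa}$ of $H$ with $\seq{\kappa}{t} = H \circ \delta^t$.

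For the hard direction of (i) I would argue by contradiction. Suppose $H$ has the product FMP but not finite memory, so there are times $t_1 > t_2 > \dots$ tending to $-\infty$ with $\seq{\kappa}{t_k} \neq 0$. Pick unit vectors $z_k \in \Zc$ with $\Norm{\Yc}{\seq{\kappa}{t_k}(z_k)} \geq \frac{1}{2} \OPnorm{\seq{\kappa}{t_k}} > 0$ and set $\Seq{z}^k = \delta^{t_k}(\OPnorm{\seq{\kappa}{t_k}}^{-1} z_k) \in c_{00}(\Zc) \subseteq \ZS$. Because the $t_k$ are distinct, every fixed coordinate of $\Seq{z}^k$ vanishes for all large $k$, so $\Seq{z}^k \to 0$ in the product topology; yet $\Norm{\Yc}{H(\Seq{z}^k)} = \OPnorm{\seq{\kappa}{t_k}}^{-1} \Norm{\Yc}{\seq{\kappa}{t_k}(z_k)} \geq \frac{1}{2}$, contradicting continuity of $H$ at $0$. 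The crucial observation is that product convergence is insensitive to the size of individual entries, so inflating each spike until its image under $H$ has order one costs nothing topologically; this is exactly the mechanism that forces finite memory.

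For (ii), part (i) already identifies the product FMP with finite memory once $c_{00}(\Zc) \subseteq \Zc^{\Z_-} = \ZS$, and \cref{FMP_comparison} gives the implication from the product FMP to the minimal FMP plus minimal continuity. The one remaining step is to show that the minimal FMP together with minimal continuity forces finite memory when $\ZS = \Zc^{\Z_-}$. Assuming again infinitely many $\seq{\kappa}{t_k} \neq 0$ with $t_k \to -\infty$, I would now exploit that $\ZS$ contains unbounded sequences: choosing unit vectors $z_k$ as above, I assemble the single input $\Seq{z} = \sum_k \delta^{t_k}(c_k z_k) \in \Zc^{\Z_-} = \ZS$ with scalars $c_k$ fixed by $\Norm{\Yc}{\seq{\kappa}{t_k}(c_k z_k)} = 1$. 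Then the partial sums $\sum_{t=T}^0 \seq{\kappa}{t}(\seq{z}{t})$ increase by a vector of norm $1$ whenever $T$ passes a $t_k$, so along $T = t_N$ they fail to be Cauchy and the limit defining the convolution representation in \eqref{CR} cannot exist, contradicting that $H$ has a formal convolution representation on all of $\ZS$. Hence all but finitely many $\seq{\kappa}{t}$ vanish, closing the cycle of equivalences.

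I expect the genuine difficulty to lie not in any computation but in selecting the right test inputs and in recognizing the distinct leverage offered by the two size regimes: in (i) bounded spikes placed at receding times already defeat product continuity, whereas in (ii) the absence of any norm constraint on $\ZS$ lets a single unbounded input destroy the very convergence of the defining series. The finite-dimensionality of $\Yc$ enters only through the background results feeding the convolution representation rather than through the combinatorial constructions themselves, which go through for an arbitrary codomain.
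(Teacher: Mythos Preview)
Your proposal is correct and takes a genuinely simpler route than the paper.

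For (i), the paper aggregates many spikes into $\Seq{z}^N = \sum_{n \leq N} \delta^{t_n}(z_n)$ with $\Norm{\Yc}{\seq{\kappa}{t_n}(z_n)} = 1$, then invokes the cone lemma (\cref{cone_lemma}) to bound $\sum_t \Norm{\Yc}{\seq{\kappa}{t}(\seq{z}{t}^N)}$ by a finite sum of values $\Norm{\Yc}{H(\cdot)}$, and finally sends $\tfrac{1}{N}\Seq{z}^N \to 0$ in the product topology. Your single-spike construction $\Seq{z}^k = \delta^{t_k}(\OPnorm{\seq{\kappa}{t_k}}^{-1} z_k)$ reaches the same contradiction in one line and avoids the cone lemma entirely. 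For (ii), the paper first uses the cone lemma to upgrade the formal convolution representation to a \emph{proper} one and then reads off $\#J = \sum_t \Norm{\Yc}{\seq{\kappa}{t}(\seq{z}{t})} < \infty$; you instead stay with the formal representation and observe that consecutive partial sums along $T = t_N$ differ by a vector of norm~$1$, so they are not Cauchy---again bypassing the cone lemma.

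What this buys you is more than aesthetic: the cone lemma is the only place in the paper's proof where finite-dimensionality of $\Yc$ is used, and you never invoke it. Your closing remark that finite-dimensionality ``enters only through the background results feeding the convolution representation'' actually undersells your argument: the only background result you use is \cref{FMP_to_CR_inf_dim}, which holds for arbitrary normed $\Yc$. Thus your proof in fact establishes \cref{productFMP_fin_dim} without the hypothesis $\dim\Yc < \infty$. This is consistent with the paper's remark following the proposition, whose counterexample equips $\Yc$ with the product topology rather than a norm. Two minor comments: in (i) you do not need metrizability, since continuity always implies sequential continuity (you are using only that direction); and in (ii) the phrase ``fail to be Cauchy'' deserves the one-line justification that $\Norm{\Yc}{S_{t_{N+1}} - S_{t_N}} = 1$ for all $N$, which you essentially give.
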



\subsection{Discussion and examples}

\subsubsection{Infinite-dimensional codomain}
\label{section_inf_dim}

The finite-dimensionality of $\Yc$ in Theorem \ref{wFMP_fin_dim} and Proposition \ref{productFMP_fin_dim} is necessary.
Indeed, for any $p \in [1,\infty]$, if $\Yc = \ell^p(\Zc)$ is equipped with the topology induced by $\pnorm{p}{\cdot}$, then the identity map $\ell^p(\Zc) \rightarrow \ell^p(\Zc)$ is a $p$-continuous linear functional without the $p$-weighted FMP.
If $\Yc = \ell^p(\Zc)$ is equipped with the subspace product topology, then the identity map has the product FMP and infinite memory.
Nonetheless, some parts of Theorem \ref{wFMP_fin_dim} and Proposition \ref{productFMP_fin_dim} continue to hold if the codomain $\Yc$ is infinite-dimensional.
We know from Lemma \ref{lem_CR} that the minimal FMP and minimal continuity are equivalent to having a formal convolution representation, albeit not necessarily a proper one.
If $p < \infty$, then these are also equivalent to $p$-continuity.
The $p$-weighted FMP becomes a strictly stronger notion as seen from the identity example $\ell^p(\Zc) \rightarrow \ell^p(\Zc)$ above, and having a convolution representation satisfying $\pOPnorm{q}{\Seq{\kappa}} < \infty$ for the H{\"o}lder conjugate $q$ is the strongest notion (except if $p=1$ in which case the 1-weighted FMP is still equivalent to $\lim_{t \rightarrow -\infty} \OPnorm{\seq{\kappa}{t}} = 0$).
That a formal convolution representation may not be a proper one is linked to the fact that the duality between $\ell^p$ and $\ell^q$ spaces for H{\"o}lder conjugates $p$ and $q$ does not generalize to the infinite-dimensional case.
Details are discussed in Appendix \ref{section_app_main}.

\subsubsection{Edge cases}

If $p \in (1,\infty)$ or $p = \infty$ and $\ZS \subseteq c_0(\Zc)$, then $p$-continuity and the $p$-weighted FMP turn out to be equivalent for linear functionals.
The cases $p=1$ and $p = \infty$ with $\ZS \not\subseteq c_0(\Zc)$ are special.
We see from Theorem \ref{wFMP_fin_dim}.(i) that any convolution representation with $\pOPnorm{\infty}{\Seq{\kappa}} < \infty$ and $\limsup_{t \rightarrow -\infty} \OPnorm{\seq{\kappa}{t}} > 0$ provides an example of a linear functional that is 1-continuous but does not have the 1-weighted FMP.
For $p=\infty$, consider a Banach limit $H \colon \ell^{\infty}(\R) \rightarrow \R$, that is, $H$ is a positive, shift-invariant, $\infty$-continuous, linear functional such that $H(\Seq{z}) = \lim_{t \rightarrow -\infty} \seq{z}{t}$ whenever the limit exists \cite{Conway2007}.
This functional had been brought forth in \cite{BoydChua1985} to show, for the first time, that there are $\infty$-continuous functionals that do not have a convolution representation, nor the $\infty$-weighted FMP.
By Theorem \ref{wFMP_fin_dim}, $H$ does not even have the minimal FMP.
To see this directly, simply observe that $H(\sum_{t = T}^0 \delta^t(\seq{z}{t})) \equiv 0$, so $H$ would have to be constantly zero.
Lastly, Theorem \ref{wFMP_fin_dim} also exposes the existence of linear functionals with the $p$-weighted FMP that are not $q$-continuous for any $1 \leq p < q \leq \infty$.

\subsubsection{Curse of memory}

Consider the linear functional $H^{\omega} \colon \ell^{\infty}(\R) \rightarrow \R$ with the convolution representation $\seq{\kappa}{t}^{\omega} = (1-t)^{1+\omega}$, where $\omega \in (0,\infty)$.
Then, $\pOPnorm{1}{\Seq{\kappa}^{\omega}} < \infty$ and, hence, $H^{\omega}$ has the $\infty$-weighted FMP.
This example has been studied to illustrate the so-called curse of memory \cite{LiHanELi2020, LiHanELi2022}.
The weighting sequence $\Seq{w}$ for which $H^{\omega}$ is continuous with respect to $\wnorm{\infty}{\cdot}$ captures the rate of decay of the memory of $H^{\omega}$.
It has to satisfy $\sum_{t \leq 0} \seq{w}{t}^{-1} \abs{\seq{\kappa}{t}^{\omega}} < \infty$, which excludes any weighting sequence with exponential decay.
This is in line with the findings in \cite{LiHanELi2020, LiHanELi2022}.

\subsubsection{Linear state equations}
\label{section_linear_sss}

Let $(\Xc,\Norm{\Xc}{\cdot})$ be a Banach space, and let $A \in L(\Xc,\Xc)$ and $B \in L(\R^d,\Xc)$.
Suppose the linear state equation $\seq{x}{t} = A(\seq{x}{t-1}) + B(\seq{z}{t})$, where $t \in \Z_-$, admits a unique solution for every input sequence $\Seq{z} \in \ell^{\infty}(\R^d)$.
The associated functional $H_{A,B} \colon \ell^{\infty}(\R^d) \rightarrow \Xc$ is minimally continuous since $H_{A,B} \circ \delta^t = A^{-t} \circ B$.
If the spectral radius $\rho(A)$ is strictly smaller than 1, then the convolution representation $\seq{\kappa}{t} = A^{-t} \circ B$ satisfies $\pOPnorm{1}{\Seq{\kappa}} < \infty$, in which case $H_{A,B}$ has the $\infty$-weighted FMP.\footnote{
As discussed in Section \ref{section_inf_dim}, this part of Theorem \ref{wFMP_fin_dim} continues to hold for an infinite-dimensional codomain, see Lemma \ref{CR_to_pcont} in Appendix \ref{section_app_main}.}
Indeed, $\rho(A) = \lim_{k \rightarrow \infty} \OPnorm{A^k}^{1/k}$ by Gelfand's formula \cite{Conway2007} and, hence, given any $r \in (\rho(A),1)$ there exists some $t_0 \in \Z_-$ such that $\OPnorm{A^{-t}} \leq r^{-t}$ for all $t \leq t_0$, which implies that $\pOPnorm{1}{\Seq{\kappa}} < \infty$.
If $\Xc \cong \R^n$ is finite-dimensional, then the condition that $\rho(A) < 1$ is implicit in the existence and uniqueness of solutions, and so is the $\infty$-weighted FMP.
Indeed, if we had $\rho(A) \geq 1$, then there existed an eigenvector $v \in \C^n$ of $A$ for some eigenvalue $\lambda \in \C$ with $\abs{\lambda} \geq 1$.
In this case, both the constant zero sequence and the sequence $\seq{x}{t} = \mathrm{Re}(\lambda^t v)$ belonged to $\ell^{\infty}(\R^n)$ and posed a solution to the state equation when the input is the constant zero sequence \cite{RC16}.

The linear state equation can be extended to a state-space system with a readout $\seq{y}{t} = h(\seq{x}{t})$, where $h \in L(\Xc,\R)$.
The associated functional becomes $H_{A,B}^h = h \circ H_{A,B} \colon \ell^{\infty}(\R^d) \rightarrow \R$, which inherits the $\infty$-weighted FMP from $H_{A,B}$ if $A$ has spectral radius strictly smaller than 1.
If $A$ is nilpotent, then $H_{A,B}^h$ has finite memory and, hence, the product FMP.
Conversely, as soon as $H_{A,B}^h$ has even the minimal FMP, we must have $\sum_{t \leq 0} \OPnorm{h \circ A^{-t} \circ B} < \infty$.

\begin{remark}
	Any linear functional $H \colon \ell^{\infty}(\Zc) \rightarrow \Yc$ with the $\infty$-weighted FMP can be realized by a linear state-space system \cite{RC16}.
If $H$ has the product FMP and, hence, finite memory, then the state operator of the system can be taken nilpotent.
If, in addition, $\Zc$ and $\Yc$ are finite-dimensional, then the state-space system can be constructed on a finite-dimensional state space.
\end{remark}

Building on linear state equations, one can construct recurrent generalized Barron functionals as follows \cite{RC24}.
Let $p,q \in [1,\infty]$ be H{\"o}lder conjugates.
Take the state space to be $\Xc = \ell^q(\R)$, and assume that the solutions to the state equation belong to $\ell^{\infty}(\ell^q(\R))$.
Let $T \colon \ell^{\infty}(\R^d) \rightarrow \ell^{\infty}(\R^d)$ denote the shift operator, and let $\sigma \colon \R \rightarrow \R$ be Lipschitz continuous.
Let $\mu$ be a Borel probability measure on $\Sigma = \R \times \ell^p(\R) \times \R^d \times \R$ satisfying $\int_{\Sigma} \abs{w} ( \pnorm{p}{a} + \Norm{\R^d}{c} + \abs{b} ) \, d\mu(w,a,c,b) < \infty$.
A recurrent generalized Barron functional $H \colon \ell^{\infty}(\R^d) \rightarrow \R$ is given by
\begin{equation*}
	H(\Seq{z})
	= \int_{\Sigma} w \sigma( a \circ H_{A,B} \circ T(\Seq{z}) + c \cdot \seq{z}{0} + b ) \, d\mu(w,a,c,b),
\end{equation*}
where $a \in \ell^p(R)$ is identified with the induced dual element in $\ell^q(\R)^*$.
This functional is nonlinear in general, but it builds on the linear functional $H_{A,B}$.
Thus, if $\sum_{t \leq 0} \OPnorm{A^{-t} \circ B} < \infty$, then $H$ has the $\infty$-weighted FMP.


\section{Kernel learning with linear functionals}
\label{section_kernel_learning}

Suppose that the norms on $\Yc$ and $\Zc$ are induced by some inner products $\ip{\cdot}{\cdot}_{\Yc}$ and $\ip{\cdot}{\cdot}_{\Zc}$ that turn $\Yc$ and $\Zc$ into Hilbert spaces (possibly infinite-dimensional).
Any (not necessarily linear) functional $H \colon \ZS \rightarrow \Yc$ can be used as a feature map to induce a symmetric, positive semi-definite kernel $K_H \colon \ZS \times \ZS \rightarrow \R$ on $\ZS$ by $K_H(\Seq{z}^1,\Seq{z}^2) = \ip{H(\Seq{z}^1)}{H(\Seq{z}^2)}_{\Yc}$.
Any such kernel has an associated RKHS, which can be characterized as follows.

\begin{proposition}[Theorem 4.21 in \cite{ChristStein2008}]
\label{RKHS}
	Consider a functional $H \colon \ZS \rightarrow \Yc$.
Let $\Yc_0$ be the closure of $\mathrm{span}(H(\ZS))$ in $\Yc$, and let $(\Hb,\ip{\cdot}{\cdot}_{\Hb})$ be the RKHS associated to the kernel $K_H$.
Then, the map $y \mapsto \ip{y}{H(\cdot)}_{\Yc}$ is an isometric isomorphism of Hilbert spaces $(\Yc_0 , \ip{\cdot}{\cdot}_{\Yc} ) \rightarrow (\Hb,\ip{\cdot}{\cdot}_{\Hb})$.
\end{proposition}

Here, we show that the RKHS is isometrically isomorphic to $\ell^2(\Zc)^*$ for orthogonal linear functionals with the minimal FMP.
Recall that $\ell^2(\Zc)^*$ is a Hilbert space with an inner product $\ip{\cdot}{\cdot}_{\ell^2(\Zc)^*}$ induced by the inner product $\ip{\Seq{z}^1}{\Seq{z}^2}_{\ell^2(\Zc)} = \sum_{t \leq 0} \ip{\seq{z}{t}^1}{\seq{z}{t}^2}_{\Zc}$ on $\ell^2(\Zc)$ and the Riesz representation theorem.

\begin{proposition}
\label{RKHS_l2}
	Suppose $\B^2(\Zc) \subseteq \ZS$.
Consider a linear functional $H \colon \ZS \rightarrow \Yc$.
Let $\Yc_0$ be the closure of $\mathrm{span}(H(\ZS))$ in $\Yc$, and let $\Hb$ be the RKHS associated to the kernel $K_H$.
Suppose $H$ has the minimal FMP and is minimally continuous.
Then, $\Hb$ embeds linearly in $\ell^2(\Zc)^*$ by mapping a function $f \in \Hb$ to the unique linear extension of $f|_{\B^2(\Zc)}$, and $\mathrm{span}(H(\B^2(\Zc)))$ is dense in $\Yc_0$.
Furthermore, if the restriction $H \colon (\B^2(\Zc),\ip{\cdot}{\cdot}_{\ell^2(\Zc)}) \rightarrow (\Yc,\ip{\cdot}{\cdot}_{\Yc})$ is orthogonal, then the embedding is an isometric isomorphism of Hilbert spaces $(\Hb,\ip{\cdot}{\cdot}_{\Hb}) \rightarrow (\ell^2(\Zc)^*,\ip{\cdot}{\cdot}_{\ell^2(\Zc)^*})$, and $\mathrm{span}(H(\B^2(\Zc))) = \Yc_0$.
\end{proposition}

Suppose we are given finitely many pairs of data/target points $(\Seq{z}^i,\omega_i) \in \ZS \times \R$, $i = 1,\dots,M$, a strictly increasing function $\Lambda \colon [0,\infty) \rightarrow [0,\infty)$, playing the role of a regularizer, and a function $\Ec \colon \R^{2M} \rightarrow \R$, which is typically an empirical risk.
Consider the (regularized empirical risk) minimization problem
\begin{equation}
\label{optim_kernel}
	\inf_{f \in \Hb} \, \Ec( f(\Seq{z}^1),\omega_1,\dots,f(\Seq{z}^M),\omega_M ) + \Lambda(\Norm{\Hb}{f}).
\end{equation}
The representer theorem \cite{ChristStein2008} states that any minimizer $\hat{f}$, if existent, takes the form $\hat{f} = \sum_{i=1}^M \hat{\alpha}_i \ip{H(\Seq{z}^i)}{H(\cdot)}_{\Yc}$ for some $\hat{\alpha}_1,\dots,\hat{\alpha}_M \in \R$.
This is a well-known method to reduce an a priori infinite-dimensional minimization problem to a finite-dimensional one \cite{SchoelHerbSmola2001}.
The coefficients $\hat{\alpha}_i$ can be estimated as the solution of a regression problem that admits a closed form solution in terms of the evaluations $\ip{H(\Seq{z}^i)}{H(\Seq{z}^j)}_{\Yc}$ if $\Ec$ is a mean square error and $\Lambda$ a Tikhonov regularizer \cite{RC25, Wahba1990}.
Let us discuss an explicit example of this.

\subsection*{Kernels in linear system identification}

In linear system identification, one faces the empirical risk minimization problem
\begin{equation}
\label{ERM_CR}
	\hat{\Seq{\kappa}}
	= \argmin_{\Seq{\kappa} \in \Hc} \, \sum_{i=1}^M \left( \omega_i - \sum_{t \leq 0} \seq{\kappa}{t} \cdot \seq{z}{t}^i \right)^2 + \mathrm{regularizer}
\end{equation}
with (possibly noisy) 1-D observations $\omega_1,\dots,\omega_M \in \R$ and input samples $\Seq{z}^1,\dots,\Seq{z}^M \in \ell^{\infty}(\R^d)$.
Here, $\Hc$ is some hypothesis class $\Hc \subseteq (\R^d)^{\Z_-}$.
That the task is formulated as minimizing over the coefficients of a convolution representation is telling of the fact that the availability of convolution representations has been a classical assumption.
A popular choice for the hypothesis class are subsets of sequence space that come from the RKHS of a chosen kernel, in which case the RKHS norm also lends itself as a suitable regularizer.
Consider, for example, the use of a Gaussian kernel, likely the most used model for kernel-based regularization.
As noted in \cite{PillonettoQuangChiuso2011}, the RKHS of the Gaussian kernel is not a subset of $\ell^1(\R^d)$.
An undesirable consequence is that choosing the RKHS of the Gaussian kernel as the hypothesis space does not preserve the bounded-input bounded-output (BIBO) stability.
This motivated \cite{DeNicolaoPillonetto2008, PillonettoDeNicolao2010} to introduce stable spline kernels and show that their RKHS satisfies BIBO stability.
In \cite{DeNicolaoPillonetto2008, PillonettoDeNicolao2010} and in the subsequent works \cite{PillonettoQuangChiuso2011, PillonettoEtal2014, Dinuzzo2015}, it is the coefficients of the convolution representation that are learned by means of kernelization.
In particular, the kernels under consideration are functions on $\R^d \times \R^d$.
Below, we discuss the alternative in which we apply the kernel trick not to the coefficients of the convolution representation but to the functional itself.
In this case, the kernel will be a function on $\ell^2(\R^d) \times \ell^2(\R^d)$.
However, we will see that the efficient computability of the kernel is not compromised by the infinite-dimensionality of its domain.

\subsection*{The recursive linear kernel}

In the notation of the previous sections, suppose $\Zc = \R^d$ and $\Xc = \Yc = \ell^2(\R^d)$ with the standard inner products.
Let $T^{-1} \colon \Xc \rightarrow \Xc$ denote the right-inverse $T^{-1}(\Seq{z}) = (\dots,\seq{z}{-1},\seq{z}{0},0)$ of the shift operator $T$.
Given $\lambda \in (0,1)$, consider the linear state equation $\seq{x}{t} = \lambda T^{-1}(\seq{x}{t-1}) + \delta^0(\seq{z}{t})$.
The associated functional $H^{\lambda} \colon \ell^{\infty}(\R^d) \rightarrow \ell^2(\R^d)$ is given by $H^{\lambda}(\Seq{z}) = \sum_{t \leq 0} \lambda^{-t} \delta^t(\seq{z}{t})$, see Section \ref{section_linear_sss}.
The induced kernel is $K_{\lambda}(\Seq{z}^1,\Seq{z}^2) = \sum_{t \leq 0} \lambda^{-2t} \ip{\seq{z}{t}^1}{\seq{z}{t}^2}_{\R^d}$, and it satisfies
\begin{equation}
\label{kernel_rec}
	K_{\lambda}(\Seq{z}^1,\Seq{z}^2)
	= \ip{\seq{z}{0}^1}{\seq{z}{0}^2}_{\R^d} + \lambda^2 K_{\lambda}(T(\Seq{z}^1),T(\Seq{z}^2)).
\end{equation}
Let $(\Hb^{\lambda},\ip{\cdot}{\cdot}_{\Hb^{\lambda}})$ be the RKHS associated to the kernel $K_{\lambda}$.
Since the image of $H^{\lambda}$ is dense in $\ell^2(\R^d)$, Proposition \ref{RKHS} conveys that $\ell^2(\R^d)$ is isometrically isomorphic to $\Hb^{\lambda}$ via $\Seq{y} \mapsto \ip{\Seq{y}}{H^{\lambda}(\cdot)}_{\ell^2(\R^d)}$.
The functional $H^{\lambda}$ has the minimal FMP and is minimally continuous.
The linear embedding of $\Hb^{\lambda}$ into $\ell^2(\R^d)^*$ promised by Proposition \ref{RKHS_l2} is given by $\ip{\Seq{y}}{H^{\lambda}(\cdot)}_{\ell^2(\R^d)} \mapsto \ip{H^{\lambda}(\Seq{y})}{\cdot}_{\ell^2(\R^d)}$.
However, this is not an isometry.
Indeed, $H^{\lambda}$ is not orthogonal as a map.
If we take $\ell^2(\R^d)$ as the domain of $H^{\lambda}$ instead of $\ell^{\infty}(\R^d)$, then we can permit $\lambda = 1$.
In this case, $H^1$ is the identity on $\ell^2(\R^d)$, which is trivially orthogonal.
The isometric isomorphism $\ell^2(\R)^d \rightarrow \Hb^1 \cong \ell^2(\R^d)^*$ from Proposition \ref{RKHS} becomes an instance of the Riesz representation theorem.
The linear embedding $\Hb^1 \hookrightarrow \ell^2(\R^d)^*$ from Proposition \ref{RKHS_l2} is now simply the identity.
Now, consider the kernelized minimization problem \eqref{optim_kernel} with $\Ec$ the empirical risk from \eqref{ERM_CR}, that is,
\begin{equation}
\label{ERM_kernel}
	\inf_{f \in \Hb^{\lambda}} \, \sum_{i=1}^M ( \omega_i - f(\Seq{z}^i) )^2 + \Lambda(\Norm{\Hb^{\lambda}}{f}).
\end{equation}
We have noted above that $\Hb^{\lambda}$ embeds linearly in $\ell^2(\R^d)^*$.
Thus, taking $\Hb^{\lambda}$ as our hypothesis class ensures BIBO stability.
By Proposition \ref{RKHS}, the minimization problem \eqref{ERM_kernel} is equivalent to
\begin{equation*}
	\inf_{y \in \ell^2(\R^d)} \, \sum_{i=1}^M ( \omega_i - \ip{y}{H^{\lambda}(\Seq{z}^i)}_{\ell^2(\R^d)} )^2 + \Lambda(\Norm{\ell^2(\R^d)}{y}).
\end{equation*}
This shows that \eqref{ERM_CR} with hypothesis class $\Hc = \ell^2(\R^d)$ corresponds to the special case $\lambda = 1$ in \eqref{ERM_kernel} (though, the regularizer may differ).
We formulated the minimization problems with given samples $\Seq{z}^1,\dots,\Seq{z}^M \in \ell^{\infty}(\R^d)$.
In practice, one does not have access to samples $\Seq{z}^i$ with an infinite past $(\dots,\seq{z}{-2}^i,\seq{z}{-1}^i,\seq{z}{0}^i)$.
Rather, one is given the final $M$ instances $\seq{z}{-M+1},\dots,\seq{z}{0} \in \R^d$ of an input trajectory $\Seq{z} \in \ell^{\infty}(\R^d)$, and the samples $\Seq{z}^1,\dots,\Seq{z}^M$ used in the minimization problem are $\Seq{z}^i = (\dots,0,0,\seq{z}{-M+1},\dots,\seq{z}{-M+i})$, which is motivated by a state-space system representation \cite{RC16}.
By the representer theorem, we are looking for a minimizer of the form $\hat{f} = \sum_{i=1}^M \alpha_i \ip{H^{\lambda}(\Seq{z}^i)}{H^{\lambda}(\cdot)}_{\ell^2(\R^d)}$ for some $\alpha_1,\dots,\alpha_M \in \R$.
For a Tikhonov regularizer $\Lambda(x) = \gamma x$, $\gamma > 0$, the minimization problem then admits the vectorized closed-form solution \cite{Wahba1990}
\begin{equation*}
	(\hat{\alpha}_1,\dots,\hat{\alpha}_M)^T
	= ( G_{\lambda} + \gamma I )^{-1} (\omega_1,\dots,\omega_M)^T,
\end{equation*}
where $I \in \R^{M \times M}$ is the identity matrix and $G_{\lambda} \in \R^{M \times M}$ is the Gram matrix with entries $K_{\lambda}(\Seq{z}^i,\Seq{z}^j)$, $i,j = 1,\dots,M$.
The strength of the linear kernel $K_{\lambda}$ is the recursive formula \eqref{kernel_rec}.
This recursive structure of the kernel makes the Gram matrix efficiently computable even if the sample size $M$ is large -- a property that only few induced kernels are known to possess \cite{RC25}.
Moreover, the minimization problem \eqref{ERM_kernel} comes with an additional hyperparameter $\lambda$, which generalizes \eqref{ERM_CR}.
If $\lambda < 1$, then one can see from \eqref{kernel_rec} that the accuracy of the recursive computation improves exponentially in the number of recursions.
Thus, the Gram matrix can be computed to a high accuracy even without using the full history of given samples, which is useful in cases where $M$ is very large.


\section{Conclusion}
\label{Conclusion}

In this paper, we have conducted a detailed analysis of various types of continuity of linear functionals.
The main result linked various so-defined fading memory properties to the availability of convolution representations.
This greatly generalizes the classical convolution theorem that first appeared in \cite{BoydChua1985}.
The most striking features are, on the one hand, the central role of the two topological notions of minimal continuity and the minimal FMP, especially since both of these notions do not depend on the topology of the domain of the functional.
This shows that the concept of fading memory is closely tied to the domain of the functional as a set, for example, having the minimal FMP on $\ell^{\infty}(\Zc)$ has greater implications than having the minimal FMP on $\ell^1(\Zc)$.
On the other hand, continuity with respect to the product topology turns out to be equivalent to finite memory if the sequences in the input space are not uniformly bounded.
This highlights the importance of giving careful consideration to which notion of fading memory to use in one's context, and it expands our understanding of the common implicit modeling assumption in system identification of linear systems.
The strength of the two topological notions of minimal continuity and minimal fading memory have been further leveraged to embed RKHSs associated to such linear functionals defined on Hilbert spaces.
Kernel learning of linear functionals has been linked to previous kernel approaches, in which the coefficients of convolution representations have been kernelized directly.

An immediate question is whether an analogous convolution theorem holds in a continuous-time setting, in which the inputs are not sequences indexed by integers but functions of the (half-axis of the) real line.
Parts of our convolution theorem are based on the duality of the $\ell^p$-sequence spaces.
We expect these results to transfer easily to the continuous-time setting since analogous duality results hold for the $L^p$-function spaces.
However, the continuous-time and the discrete-time case are not conceptually equivalent in all aspects.
For example, the product FMP exhibits fundamentally different behaviour in continuous-time.
We have seen in Proposition \ref{productFMP_fin_dim} that the product FMP is equivalent to finite memory in discrete-time.
In continuous-time, the product FMP and the availability of a convolution representation are incompatible.
A proof of this can be found in Appendix \ref{appendix_cont}.
Morally, the key difference is that any measure on the integers is absolutely continuous with respect to the counting measure.
On the real line, on the other hand, functionals that are dual to measures that are not absolutely continuous with respect to the Lebesgue measure cannot admit a convolution representation but can still have fading memory.
Thus, although some concepts from the discrete-time case carry over, a careful study of the continuous-time case is desirable.

The idea of having different notions of fading memory as well as using different weighting sequences in the concept of the $p$-weighted FMP lends itself to the study of the so-called curse of memory.
This curse of memory has been approached in continuous-time in \cite{LiHanELi2020, LiHanELi2022}.
However, as we pointed out in the preceding paragraph, one ought to be careful with applying intuition from discrete-time functionals to their continuous-time counterparts.
One method of attack to the curse of memory are lower and upper bounds on approximation rates.
Another method of attack would be to study continuity of a functional with respect to a weighted $\ell^p$-norm and at the same time discontinuity with respect to the $\ell^p$-norm weighted by a different weighting sequence.

In linear system identification, another key result has been Willems' fundamental lemma \cite{WillemsEtal2005}.
This result has often been applied in a state-space system context \cite{BerbAllg2020, DePersisTesi2020, vanWaardeEtal2020}.
To extend the applicability of the lemma, it is of interest to identify controllability conditions for fading memory systems that admit solely a convolution representation but no state-space system representation.
Willems' fundamental lemma has also recently been studied in a kernel framework \cite{MoloFaul2024}, which needs to be investigated further.

Finally, it is of great interest whether the ideas from this paper find applications in the study of nonlinear functionals.
For nonlinear functionals, the analog of a convolution representation could be a Volterra series representation.
Such representations have been proved to exist in \cite{RC9,RC25} and also found their way to system identification \cite{DallaLCarliPillonetto2021}.
We hypothesize that some of the assumptions in those results can instead be replaced by careful fading memory assumptions.

\acks{The authors acknowledge partial financial support from the School of Physical and Mathematical Sciences of the Nanyang Technological University.
The second author is funded by an Eric and Wendy Schmidt AI in Science Postdoctoral Fellowship at the Nanyang Technological University.
We thank Lyudmila Grigoryeva and Qianxiao Li for helpful discussions and remarks.}


\appendix

\section{Technical details for Section III}
\label{section_app_main}

\subsection*{The general case}

This section covers the results in which the codomain $\Yc$ may be infinite-dimensional.
We begin by showing that the minimal FMP and minimal continuity are equivalent to having a formal convolution representation.

\begin{proof}[\PfAlt{Lemma \ref{lem_CR}.(i)}]
	Suppose $H$ has the minimal FMP and is minimally continuous.
Let $\tilde{H} \colon \mathrm{span}(\ZS) \rightarrow \Yc$ be the unique linear extension of $H$, and define $\seq{\kappa}{t} = \tilde{H} \circ \delta^t$.
Restricted to the unit ball, $\seq{\kappa}{t} = H \circ \delta^t$ is continuous.
Being a linear map, continuity on the unit ball is equivalent to continuity on all of $\Zc$.
Also by linearity, $H ( \sum_{t = T}^0 \delta^t(\seq{z}{t}) ) = \sum_{t = T}^0 \seq{\kappa}{t}(\seq{z}{t})$ for any $\Seq{z} \in \ZS$ and $T \in \Z_-$.
Taking the limit and applying the minimal FMP, we obtain a formal convolution representation of $H$.
The converse implication is clear.
\end{proof}


\begin{lemma}
\label{CR_to_pcont}
	Let $p \in [1,\infty]$, and suppose $\B^p(\Zc) \subseteq \ZS \subseteq \ell^p(\Zc)$ or $c_0(\Zc) \cap \B^p(\Zc) \subseteq \ZS \subseteq c_0(\Zc) \cap \ell^p(\Zc)$.
Consider a linear functional $H \colon \ZS \rightarrow \Yc$ with a formal convolution representation $\Seq{\kappa}$.
\begin{enumerate}[\upshape (i)]\itemsep=0em
\item
The functional $H$ is $p$-continuous.

\item
If $p \neq 1$ and $\pOPnorm{q}{\Seq{\kappa}} < \infty$ for the H{\"o}lder conjugate $q \in [1,\infty)$ of $p$, then $H$ has the $p$-weighted FMP.
\end{enumerate}
\end{lemma}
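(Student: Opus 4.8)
The plan is to prove the two items separately, using the uniform boundedness principle for (i) and a H\"older estimate combined with a convergence-factor construction for (ii). For (i), I would introduce the truncated functionals $H_T(\Seq z)=\sum_{t=T}^0\seq{\kappa}{t}(\seq{z}{t})$. Each $H_T$ is a finite sum of maps $\Seq z\mapsto\seq{\kappa}{t}(\seq{z}{t})$, every one of which is a bounded linear functional precomposed with a coordinate projection and hence $\pnorm{p}{\cdot}$-continuous; thus each $H_T$ extends by the same formula to a continuous linear map on the ambient Banach space $E$, where $E=\ell^p(\Zc)$ in the first scenario and $E=c_0(\Zc)$ (for $p=\infty$) or $E=\ell^p(\Zc)$ (for $p<\infty$, using $c_0(\Zc)\cap\ell^p(\Zc)=\ell^p(\Zc)$) in the second. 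The formal convolution representation \eqref{CR} states precisely that $H_T\to H$ pointwise on $\ZS$, hence on the closed unit ball of $E$, which lies in $\ZS$ by hypothesis. Pointwise convergence makes $\{H_T\}$ pointwise bounded on that ball, and by homogeneity on all of $E$, so Banach--Steinhaus yields $M:=\sup_T\OPnorm{H_T}<\infty$. Letting $T\to-\infty$ in $\Norm{\Yc}{H_T(\Seq z)}\le M\pnorm{p}{\Seq z}$ gives $\Norm{\Yc}{H(\Seq z)}\le M\pnorm{p}{\Seq z}$ for all $\Seq z\in\ZS$, which is the $p$-continuity of $H$. The point to watch is that \eqref{CR} provides only conditional convergence of the partial sums, so continuity cannot be extracted term by term and the uniform bound is genuinely needed.

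For (ii) I would begin from the triangle-inequality bound
\[
\Norm{\Yc}{H(\Seq z)}=\lim_{T\to-\infty}\Norm{\Yc}{\sum_{t=T}^0\seq{\kappa}{t}(\seq{z}{t})}\le\sum_{t\le0}\OPnorm{\seq{\kappa}{t}}\,\norm{\seq{z}{t}},
\]
valid for every $\Seq z\in\ZS$, then split each summand as $\bigl(\OPnorm{\seq{\kappa}{t}}\seq{w}{t}^{-1/p}\bigr)\bigl(\seq{w}{t}^{1/p}\norm{\seq{z}{t}}\bigr)$ and apply H\"older's inequality with exponents $q$ and $p$ (the hypothesis $p\neq1$ is what makes $q$ finite), obtaining
\[
\Norm{\Yc}{H(\Seq z)}\le\Bigl(\sum_{t\le0}\OPnorm{\seq{\kappa}{t}}^q\,\seq{w}{t}^{-q/p}\Bigr)^{1/q}\wnorm{p}{\Seq z}.
\]
For $p=\infty$ the same splitting produces the scalar factor $\sum_{t\le0}\OPnorm{\seq{\kappa}{t}}\seq{w}{t}^{-1}$ paired with $\wnorm{\infty}{\Seq z}$. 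Since $\ZS\subseteq\ell^p(\Zc)$ and $\seq{w}{t}\le1$ force $\wnorm{p}{\Seq z}<\infty$, the estimate shows that $H$ is $\wnorm{p}{\cdot}$-Lipschitz, hence continuous and thus possessed of the $p$-weighted FMP, as soon as the weighting sequence $\Seq w$ is chosen to make the displayed scalar factor finite.

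This last point is where I expect the real work to lie. Setting $a_t=\OPnorm{\seq{\kappa}{t}}^q$ (respectively $a_t=\OPnorm{\seq{\kappa}{t}}$ when $p=\infty$), the hypothesis $\pOPnorm{q}{\Seq{\kappa}}<\infty$ means $\sum_{t\le0}a_t<\infty$, and I must produce a weighting sequence with $\sum_{t\le0}a_t\,\seq{w}{t}^{-q/p}<\infty$ (respectively $\sum_{t\le0}a_t\seq{w}{t}^{-1}<\infty$). This is a convergence-factor problem of du Bois--Reymond / Abel--Dini type. Replacing $a_t$ by $\tilde a_t=a_t+2^t$ to guarantee strictly positive tails $R_t=\sum_{s\le t}\tilde a_s>0$ with $R_0<\infty$, I would set $\seq{v}{t}=\sqrt{R_t/R_0}$; this $\Seq v$ is monotone, takes values in $(0,1]$, and tends to $0$ as $t\to-\infty$, while the elementary telescoping estimate $\tilde a_t/\sqrt{R_t}\le2(\sqrt{R_t}-\sqrt{R_{t-1}})$ gives $\sum_{t\le0}a_t/\seq{v}{t}\le\sum_{t\le0}\tilde a_t/\seq{v}{t}\le2R_0<\infty$. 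Taking $\seq{w}{t}=\seq{v}{t}^{\,p/q}$ when $p<\infty$ (so that $\seq{w}{t}^{-q/p}=\seq{v}{t}^{-1}$) and $\seq{w}{t}=\seq{v}{t}$ when $p=\infty$ yields a genuine weighting sequence rendering the H\"older factor finite, which completes the argument. The delicate aspects are maintaining monotonicity and the range $(0,1]$ of $\Seq w$ simultaneously with the summability, which the tail normalization secures, and reconciling the exponents in the degenerate case $p=\infty$, $q=1$.
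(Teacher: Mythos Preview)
Your proof is correct and follows essentially the same approach as the paper: uniform boundedness for (i) and a H\"older estimate for (ii), with the same choice of exponents. The only notable difference is that in (ii) the paper simply asserts the existence of a weighting sequence $\Seq{w}$ making $\sum_{t\le0}\seq{w}{t}^{-q/p}\OPnorm{\seq{\kappa}{t}}^q$ finite, whereas you supply the explicit Abel--Dini construction via square roots of tails; your version is thus more self-contained on this point.
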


\begin{proof}
	(i)
Note that $\mathrm{span}(\ZS) = \ell^p(\Zc)$ or $\mathrm{span}(\ZS) = c_0(\Zc) \cap \ell^p(\Zc)$, which is a Banach space with norm $\pnorm{p}{\cdot}$ in either case.
For each $n \in \N$, the finite memory linear functional $H_n \colon \mathrm{span}(\ZS) \rightarrow \Yc$ given by $H_n(\Seq{z}) = \sum_{t = -n}^0 \seq{\kappa}{t}(\seq{z}{t})$ is $p$-continuous.
Let $\tilde{H}$ be the unique linear extension of $H$ to $\mathrm{span}(\ZS)$.
Since $H_n$ converges point-wise to $\tilde{H}$, the sequence $(\Norm{\Yc}{H_n(\Seq{z})})_n$ is bounded for all $\Seq{z} \in \mathrm{span}(\ZS)$.
By the principle of uniform boundedness, the operator norm of $H_n$ with respect to $\pnorm{p}{\cdot}$ is uniformly bounded in $n$.
This implies that the point-wise limit of $H_n$ has finite operator norm with respect to $\pnorm{p}{\cdot}$, that is, $\tilde{H}$ is $p$-continuous.
In particular, $H$ is $p$-continuous as a restriction of $\tilde{H}$.

(ii)
Let $r = 1/p$ if $p < \infty$ and $r = 1$ if $p = \infty$.
Take a weighting sequence $\Seq{w}$ such that $C := \sum_{t \leq 0} \seq{w}{t}^{-qr} \OPnorm{\seq{\kappa}{t}}^q < \infty$.\footnote{
The existence of such a weighting sequence is a fact from calculus:
given a convergent series $\sum_{t \leq 0} a_t$ with $(a_t)_t \subseteq [0,\infty)$, there exists an unbounded monotone sequence $(b_t)_t \subseteq (1,\infty)$ such that $\sum_{t \leq 0} a_t b_t < \infty$.
In our case, $a_t = \OPnorm{\seq{\kappa}{t}}^q$ and $\seq{w}{t} = b_{t}^{-1/qr}$.
}
Then, H{\"o}lder's inequality shows that $H$ has the $p$-weighted FMP;
\begin{equation*}
	\Norm{\Yc}{ H(\Seq{z}) }
	\leq \sum_{t \leq 0} \seq{w}{t}^{-r} \OPnorm{\seq{\kappa}{t}} \seq{w}{t}^r \norm{\seq{z}{t}}
	\leq C^{1/q} \wnorm{p}{\Seq{z}}.
\end{equation*}
\end{proof}


\begin{proposition}
\label{1wFMP}
	Suppose $\ZS \subseteq \ell^1(\Zc)$.
Then, a linear functional $H \colon \ZS \rightarrow \Yc$ has the 1-weighted FMP if and only if it has a convolution representation satisfying $\lim_{t \rightarrow -\infty} \OPnorm{\seq{\kappa}{t}} = 0$.
\end{proposition}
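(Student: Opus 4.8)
The plan is to prove both implications through the formal convolution representation supplied by \cref{FMP_to_CR_inf_dim}, upgrading it to a proper one with decaying coefficients in one direction and reconstructing a weighting sequence from the decay in the other. For the forward implication, suppose $H$ has the 1-weighted FMP with associated weighting sequence $\Seq{w}$. By \cref{FMP_comparison}, the 1-weighted FMP entails both the minimal FMP and minimal continuity, so \cref{FMP_to_CR_inf_dim} yields a formal convolution representation $\Seq{\kappa}$ with $\seq{\kappa}{t} = \tilde{H} \circ \delta^t$, where $\tilde{H}$ is the linear extension of $H$. The crucial step is to extract \emph{operator-norm} decay of $\Seq{\kappa}$ from continuity of $H$ at $0 \in \ZS$ with respect to $\wnorm{1}{\cdot}$. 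The key observation is that a spike $\delta^t(z)$ with $z \in \Bc$ satisfies $\wnorm{1}{\delta^t(z)} = \seq{w}{t} \norm{z} \leq \seq{w}{t}$, which tends to $0$ as $t \to -\infty$ \emph{uniformly} over the unit ball. Thus, if $\OPnorm{\seq{\kappa}{t}} \not\to 0$, one could pick $t_k \to -\infty$ and $z_k \in \Bc$ with $\Norm{\Yc}{\seq{\kappa}{t_k}(z_k)}$ bounded below while $\wnorm{1}{\delta^{t_k}(z_k)} \to 0$, contradicting continuity at $0$ because $H(\delta^{t_k}(z_k)) = \seq{\kappa}{t_k}(z_k)$.

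This argument gives $\lim_{t \to -\infty} \OPnorm{\seq{\kappa}{t}} = 0$; since each $\seq{\kappa}{t}$ is bounded (by minimal continuity) and only finitely many indices lie near $0$, it follows that $M := \sup_{t} \OPnorm{\seq{\kappa}{t}} < \infty$. Because $\ZS \subseteq \ell^1(\Zc)$, the estimate $\sum_{t} \OPnorm{\seq{\kappa}{t}} \norm{\seq{z}{t}} \leq M \pnorm{1}{\Seq{z}} < \infty$ promotes the formal representation to a proper one, completing this direction.

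For the converse, suppose $H$ has a proper convolution representation $\Seq{\kappa}$ with $\OPnorm{\seq{\kappa}{t}} \to 0$. The task is to manufacture a genuine weighting sequence—monotone, valued in $(0,1]$, and vanishing at $-\infty$—that dominates $\Seq{\kappa}$ up to a constant, the obstacle being that $\OPnorm{\seq{\kappa}{t}}$ itself need not be monotone. I would set $M = \sup_{t} \OPnorm{\seq{\kappa}{t}}$, which is finite by the decay hypothesis (and assume $M > 0$, since otherwise $H \equiv 0$ has the 1-weighted FMP trivially), and pass to the running supremum $b_t = \sup_{s \leq t} \OPnorm{\seq{\kappa}{s}}$. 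This quantity is non-decreasing in $t$, bounded by $M$, and satisfies $b_t \to 0$ as $t \to -\infty$ precisely because $\OPnorm{\seq{\kappa}{t}} \to 0$. Defining $\seq{w}{t} = \max(b_t / M, 2^t)$ repairs positivity without destroying monotonicity or the limit, so $\Seq{w}$ is a weighting sequence obeying $\OPnorm{\seq{\kappa}{t}} \leq M \seq{w}{t}$.

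With this $\Seq{w}$ fixed, I would conclude by observing that for $\Seq{z}, \Seq{z}' \in \ZS$ one has $H(\Seq{z}) - H(\Seq{z}') = \sum_{t} \seq{\kappa}{t}(\seq{z}{t} - \seq{z}'_t)$, a difference of two absolutely convergent series combined via linearity of $\tilde{H}$, and hence $\Norm{\Yc}{H(\Seq{z}) - H(\Seq{z}')} \leq \sum_{t} \OPnorm{\seq{\kappa}{t}} \norm{\seq{z}{t} - \seq{z}'_t} \leq M \wnorm{1}{\Seq{z} - \Seq{z}'}$. This makes $H$ Lipschitz, hence continuous, with respect to $\wnorm{1}{\cdot}$, which is exactly the 1-weighted FMP. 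I expect the main obstacle to reside in the two delicate passages highlighted above: converting the pointwise continuity at $0$ into uniform operator-norm decay in the forward direction, and fabricating an admissible weighting sequence from the merely decaying, possibly non-monotone coefficients in the converse.
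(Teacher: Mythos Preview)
Your proof is correct and follows essentially the same route as the paper: both directions hinge on the spike identity $H(\delta^t(z)) = \seq{\kappa}{t}(z)$ together with $\wnorm{1}{\delta^t(z)} = \seq{w}{t}\norm{z} \to 0$, and on passing to a monotone running supremum of $\OPnorm{\seq{\kappa}{t}}$ to manufacture the weighting sequence. The only cosmetic difference is that the paper takes $\seq{w}{t} = \sup_{s \leq t} \min\{1,\OPnorm{\seq{\kappa}{s}}\}$ and handles the eventually-zero (finite-memory) case separately, whereas your $\max(b_t/M,\,2^t)$ absorbs that case directly.
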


\begin{proof}
	Suppose $H$ has a convolution representation satisfying $\lim_{t \rightarrow -\infty} \OPnorm{\seq{\kappa}{t}} = 0$.
Consider the monotone sequence $\seq{w}{t} = \sup_{s \leq t} \min\{ 1 , \OPnorm{ \seq{\kappa}{s} } \}$.
If $\Seq{w}$ is eventually zero, then $H$ has finite memory, which clearly implies the 1-weighted FMP.
If $\Seq{w}$ is not eventually zero, then it is a weighting sequence.
With this weighting sequence, $\Norm{\Yc}{ H(\Seq{z}) } \leq C \wnorm{1}{\Seq{z}}$, where $C = \sup_{t \leq 0} \max\{ 1 , \OPnorm{\seq{\kappa}{t}} \}$.
This shows that $H$ has the 1-weighted FMP.
Conversely, if $H$ has the 1-weighted FMP, then there exists a weighting sequence $\Seq{w}$ such that $H$ is continuous with respect to $\wnorm{1}{\cdot}$.
Take $(\eta_n)_{n \in \N} \subseteq (0,1)$ such that $\Norm{\Yc}{H(\Seq{z})} \leq n^{-1}$ for all $\Seq{z} \in \ZS$ with $\wnorm{1}{\Seq{z}} \leq \eta_n$.
We know from Remark \ref{FMP_comparison} and Lemma \ref{lem_CR}.(i) that $H$ has a formal convolution representation with some $\Seq{\kappa} \in L(\Zc,\Yc)^{\Z_-}$.
Let $\epsilon > 0$ and $n \geq \epsilon^{-1}$.
Take $T \in \Z_-$ with $\seq{w}{T} \leq \eta_n$.
For any $t \leq T$ and any $z \in \Zc$ with $\norm{z} = 1$, we have $\wnorm{1}{\delta^t(z)} = \seq{w}{t} \leq \eta_n$ and, hence, $\Norm{\Yc}{ \seq{\kappa}{t}(z) } = \Norm{\Yc}{ H( \delta^t(z) ) } \leq n^{-1} \leq \epsilon$.
Thus, $\OPnorm{\seq{\kappa}{t}} \rightarrow 0$ as $t \rightarrow -\infty$.
In particular, $\pOPnorm{\infty}{\Seq{\kappa}} < \infty$, which implies that the formal convolution representation is a proper one.
\end{proof}

\begin{example}
	Let $(\Vc,\Norm{\Vc}{\cdot})$ and $(\Wc,\Norm{\Wc}{\cdot})$ be two normed vector spaces over $\R$.
Take $\Zc = L(\Vc,\Wc)$ to be the vector space of continuous linear maps from $\Vc$ to $\Wc$ with the operator norm, and $\Yc = \ell^1(\Wc)$ with the $\ell^1$-norm.
Fix a bounded sequence $\Seq{v} \in \ell^{\infty}(\Vc)$, and consider the sequential evaluation functional
\begin{equation*}
	H_{\Seq{v}} \colon \ell^1(L(\Vc,\Wc)) \rightarrow \ell^1(\Wc),
	\quad \Seq{A} \mapsto (\seq{A}{t}(\seq{v}{t}))_{t \leq 0}.
\end{equation*}
Then, $H_{\Seq{v}}$ is minimally continuous, has the minimal FMP, and has the formal convolution representation $\seq{\kappa}{t}(A) = H \circ \delta^t(A) = (\dots,0,A(\seq{v}{t}),0,\dots,0)$.
In particular, $\OPnorm{\seq{\kappa}{t}} = \Norm{\Vc}{\seq{v}{t}}$, and $H_{\Seq{v}}$ has the 1-weighted FMP if and only if $\lim_{t \rightarrow -\infty} \Norm{\Vc}{\seq{v}{t}} = 0$.
\end{example}

In Section \ref{section_main}, we introduced the notation $\pOPnorm{q}{\Seq{\kappa}}$ for $\Seq{\kappa} \in L(\Zc,\Yc)^{\Z_-}$.
We use the same notation for elements in $L(\Zc,\R)^{\Z_-}$, where $L(\Zc,\R)$ is equipped with the operator norm induced by $\norm{\cdot}$ and the absolute value on $\R$.
Given an $L \in \Yc^*$, we denote by $L \circ \Seq{\kappa} = (L \circ \seq{\kappa}{t})_{t \leq 0}$ the element-wise composition.
The proof of the next result uses the well-known duality of sequence spaces.

\begin{proposition}
\label{pcont_to_CR_inf_dim}
	Let $p,q \in [1,\infty]$ be H{\"o}lder conjugates.
Suppose $\{ \Seq{z} \in \mathrm{span}(\ZS) \colon \pnorm{p}{\Seq{z}} \leq 1\} \subseteq \ZS \subseteq c_0(\Zc) \cap \ell^p(\Zc)$.
Then, any $p$-continuous linear functional $H \colon \ZS \rightarrow \Yc$ has a convolution representation satisfying $\pOPnorm{q}{L \circ \Seq{\kappa}} < \infty$ for every $L \in \Yc^*$.
\end{proposition}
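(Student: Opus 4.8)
The plan is to first extract the representing sequence $\Seq{\kappa}$ from results already in hand, then to bound $\pOPnorm{q}{L \circ \Seq{\kappa}}$ by a duality computation performed one functional at a time, and finally to read off absolute convergence. For the first step, observe that the hypothesis forces $\ZS \subseteq c_0(\Zc)$, so \cref{FMP_comparison} shows that the $p$-continuity of $H$ entails both minimal continuity and the minimal FMP. \cref{FMP_to_CR_inf_dim} then produces a formal convolution representation, and its proof identifies the coefficients as $\seq{\kappa}{t} = \tilde{H} \circ \delta^t$, where $\tilde{H} \colon \mathrm{span}(\ZS) \rightarrow \Yc$ is the unique linear extension of $H$. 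Because the hypothesis places the $p$-unit ball of $\mathrm{span}(\ZS)$ inside $\ZS$, linearity upgrades the $p$-continuity of $H$ to $p$-continuity of $\tilde{H}$. It then only remains to control the coefficients and to verify properness.

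The heart of the argument is the following duality step, carried out for a fixed $L \in \Yc^*$. The composite $L \circ \tilde{H}$ is a $\pnorm{p}{\cdot}$-bounded linear functional on $\mathrm{span}(\ZS)$, with norm at most $\OPnorm{L}$ times the operator norm of $\tilde{H}$ taken with respect to $\pnorm{p}{\cdot}$. By \cref{asmpt_sequ_space}, $\mathrm{span}(\ZS)$ contains every finitely supported $\Zc$-valued sequence, and these are $\pnorm{p}{\cdot}$-dense in $Y_p := c_0(\Zc) \cap \ell^p(\Zc)$, which equals $\ell^p(\Zc)$ when $p < \infty$ and $c_0(\Zc)$ when $p = \infty$. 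Hence $L \circ \tilde{H}$ extends uniquely to a continuous functional $\Phi_L$ on $Y_p$ of the same norm, whose coordinate functionals are $\Phi_L \circ \delta^t = L \circ \seq{\kappa}{t}$. The standard duality of the sequence spaces $Y_p$, obtained concretely by choosing near-norming vectors in $\Bc$ at each time $t$, then yields $\pnorm{q}{(\OPnorm{L \circ \seq{\kappa}{t}})_{t \leq 0}} \leq \OPnorm{\Phi_L} < \infty$, that is, $\pOPnorm{q}{L \circ \Seq{\kappa}} < \infty$. The point requiring attention here is to run the three regimes $q = \infty$ (the case $p = 1$), $1 < q < \infty$, and $q = 1$ (the case $p = \infty$) uniformly, and in the last regime to complete against $c_0(\Zc)$ rather than $\ell^{\infty}(\Zc)$ so that the finitely supported sequences are genuinely dense.

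Once the bound is secured, Hölder's inequality gives, for every $L \in \Yc^*$ and every $\Seq{z} \in \ZS \subseteq \ell^p(\Zc)$, the estimate $\sum_{t \leq 0} \abs{L(\seq{\kappa}{t}(\seq{z}{t}))} \leq \pOPnorm{q}{L \circ \Seq{\kappa}} \, \pnorm{p}{\Seq{z}} < \infty$, so the scalar series associated with each $L$ converges absolutely and the representation is of the desired type.

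I expect this last passage to be the genuine obstacle. The naive route to a norm-absolutely convergent $\Yc$-valued series would be to bound $\sum_t \Norm{\Yc}{\seq{\kappa}{t}(\seq{z}{t})} \leq \sum_t \OPnorm{\seq{\kappa}{t}} \, \norm{\seq{z}{t}}$ and apply Hölder with $\pOPnorm{q}{\Seq{\kappa}} < \infty$. The difficulty is that the duality delivers $\pOPnorm{q}{L \circ \Seq{\kappa}} < \infty$ only for each individual $L$, and the supremum over $L$ in the unit ball of $\Yc^*$ need not commute with the $\ell^q$-sum over $t$, so $\pOPnorm{q}{\Seq{\kappa}}$ may itself be infinite. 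This is exactly why the conclusion is recorded through the composed coefficients $L \circ \Seq{\kappa}$ and why the argument must proceed functional-by-functional rather than uniformly; it is also the precise juncture at which finite-dimensionality of the codomain, exploited in \cref{wFMP_fin_dim}, would permit the interchange.
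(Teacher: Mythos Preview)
Your proposal is correct and follows essentially the same route as the paper: obtain a formal convolution representation via \cref{FMP_comparison} and \cref{FMP_to_CR_inf_dim}, extend $H$ to a $p$-continuous $\tilde{H}$ on $\mathrm{span}(\ZS)$ using the unit-ball hypothesis, and then, for each $L \in \Yc^*$, invoke the duality $(c_0(\Zc) \cap \ell^p(\Zc))^* \cong \ell^q(\Zc^*)$ after extending $L \circ \tilde{H}$ by density. Your closing discussion of properness is in fact more careful than the paper's own proof, which stops at the formal representation plus the $L$-wise $\ell^q$-bound and does not separately address norm-absolute convergence in $\Yc$; your observation that this gap is exactly what finite-dimensionality of $\Yc$ closes (via \cref{rem_CR_inf_dim}) is spot on.
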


\begin{proof}
	By Remark \ref{FMP_comparison} and Lemma \ref{lem_CR}.(i), $H$ has a formal convolution representation with some $\Seq{\kappa} \in L(\Zc,\Yc)^{\Z_-}$.
Since $\ZS$ contains the unit ball of its span, the unique linear extension $\tilde{H}$ of $H$ to the span of $\ZS$ is $p$-continuous.
Denote $\Vc = \ell^p(\ZS)$ if $p < \infty$, respectively $\Vc = c_0(\Zc)$ if $p = \infty$.
Let $L \in \Yc^*$.
Since $\mathrm{span}(\ZS)$ is dense in $(\Vc,\pnorm{p}{\cdot})$ by Assumption \ref{asmpt_sequ_space}.(i), $L \circ \tilde{H}$ is the restriction to $\mathrm{span}(\ZS)$ of an element in the dual space of $(\Vc,\pnorm{p}{\cdot})$.
It is a classical result that $\Seq{\lambda} \mapsto ( \Seq{z} \mapsto \sum_{t \leq 0} \seq{\lambda}{t}(\seq{z}{t}) )$ is an isomorphism $\ell^q(\Zc^*,\OPnorm{\cdot}) \rightarrow (\Vc,\pnorm{p}{\cdot})^*$.
Thus, there exists an element $\Seq{\lambda} \in \ell^q(\Zc^*,\OPnorm{\cdot})$ such that $L \circ \tilde{H}(\Seq{z}) = \sum_{t \leq 0} \seq{\lambda}{t}(\seq{z}{t})$.
This can only be if $L \circ \seq{\kappa}{t} = \seq{\lambda}{t}$ for all $t \in \Z_-$.
In particular, $\pOPnorm{q}{L \circ \Seq{\kappa}} = \pOPnorm{q}{\Seq{\lambda}} < \infty$.
\end{proof}

\subsection*{The finite-dimensional case}

This section develops stronger results for a finite-dimen\-sional codomain $\Yc$.
In this case, we can deduce $\pOPnorm{q}{\Seq{\kappa}} < \infty$ in Proposition \ref{pcont_to_CR_inf_dim} without the dual elements $L \in \Yc^*$.
This is the first key difference between the finite- and the infinite-dimensional case.

\begin{remark}
\label{rem_CR_inf_dim}
	Suppose $\Yc$ is finite-dimensional.
Let $q \in [1,\infty]$.
If $\Seq{\kappa} \in L(\Zc,\Yc)^{\Z_-}$ satisfies $\pOPnorm{q}{L \circ \Seq{\kappa}} < \infty$ for every $L \in \Yc^*$, then $\pOPnorm{q}{\Seq{\kappa}} < \infty$.
This follows from the fact that any two norms on a finite-dimensional vector space are equivalent.
Explicitly, if $d \in \N$ denotes the dimension of $\Yc$, then there exist $L_1,\dots,L_d \in \Yc^*$ and a constant $C > 0$ such that $\OPnorm{\lambda} \leq C \sum_{j=1}^d \OPnorm{L_j \circ \lambda}$ for any $\lambda \in L(\Zc,\Yc)$.
Therefore, $\pOPnorm{q}{\Seq{\kappa}} \leq C d \sum_{j=1}^d \pOPnorm{q}{L_j \circ \Seq{\kappa}} < \infty$ by Jensen's inequality.
\end{remark}

The second key difference between the finite- and the infinite-dimensional case is that a formal convolution representation is automatically a proper one, which we stated in Lemma \ref{lem_CR}.(ii).
This fact is a consequence of the following technical result.

\begin{lemma}
\label{cone_lemma}
	Suppose $\Yc$ is finite-dimensional with dimension $d \in \N$.
Consider a linear functional $H \colon \ZS \rightarrow \Yc$ that has a formal convolution representation.
There exists a constant $c > 0$ independent of $H$ such that for all $\Seq{z} \in \ZS$ there exist disjoint subsets $J_1,\dots,J_{2^d} \subseteq \Z_-$ with
\begin{equation*}
	\sum_{t \leq 0} \Norm{\Yc}{\seq{\kappa}{t}(\seq{z}{t})}
	\leq c \sum_{i=1}^{2^d} \Norm{\Yc}{H\left( \sum_{t \in J_i} \delta^t(\seq{z}{t}) \right)}
	< \infty.
\end{equation*}
\end{lemma}

\begin{proof}
	We make the following claim.
There exists a constant $c > 0$ and there exists a partition of $\Yc$ into cones $C_1,\dots,C_{2^d}$ intersecting pairwise in the origin, that is, $\bigcup_i C_i = \Yc$ and $C_i \cap C_j = \{0\}$ for any $i \neq j$,  such that for any $i \in \{1,\dots,2^d\}$, any $N \in \N$, and any $v_1,\dots,v_N \in C_i$ we have $\sum_{n=1}^N \Norm{\Yc}{v_n} \leq c \Norm{\Yc}{ \sum_{n=1}^N v_i }$.
Now, fix $\Seq{z} \in \ZS$.
Consider $J_i = \{ t \in \Z_- \colon \seq{\kappa}{t}(\seq{z}{t}) \in C_i \backslash \{0\} \}$ and $\Seq{z}^i = \sum_{t \in J_i} \delta^t(\seq{z}{t})$, which belongs to $\ZS$ by Assumption \ref{asmpt_sequ_space}.(ii).
Then,
\begin{equation*}
\begin{split}
	\sum_{t \in J_i} \Norm{\Yc}{ \seq{\kappa}{t}(\seq{z}{t}) }
	&= \liminf_{T \rightarrow -\infty} \sum_{t = T}^0 \Norm{\Yc}{ \seq{\kappa}{t}(\seq{z}{t}^i) }
	\\
	&\leq \liminf_{T \rightarrow -\infty} c \Norm{\Yc}{ \sum_{t = T}^0 \seq{\kappa}{t}(\seq{z}{t}^i) }
	= c \Norm{\Yc}{ H(\Seq{z}^i) }.
\end{split}
\end{equation*}
Thus, $\sum_{t \leq 0} \Norm{\Yc}{ \seq{\kappa}{t}(\seq{z}{t}) } \leq c \sum_{i=1}^{2^d} \Norm{\Yc}{ H(\Seq{z}^i) } < \infty$.
It remains to prove the claim.
It suffices to show it for $\R^d$ with the Euclidean norm since all norms on a finite-dimensional vector space are equivalent.
Let $C_1,\dots,C_{2^d}$ be the orthants.
The boundaries can be assigned in any way so that $\bigcup_i C_i = \R^d$ and $C_i \cap C_j = \{0\}$.
Consider the first orthant $C_1$ containing the vector $v_0 = (1,\dots,1)$.
Let $\pi \colon C_1 \rightarrow \{ r v_0 \colon r \geq 0 \}$ be the orthogonal projection onto the ray of $v_0$.
Then, $\Norm{\R^d}{\pi(v)} \leq \Norm{\R^d}{v}$ for any $v \in C_1$.
Also, $\Norm{\R^d}{v} \leq \sqrt{2} \Norm{\R^d}{\pi(v)}$ because the angle between $v$ and $v_0$ is at most $\pi/4$.
Furthermore, $\Norm{\R^d}{v+w} = \Norm{\R^d}{v} + \Norm{\R^d}{w}$ for any $v,w$ in the ray.
Thus,
\begin{equation*}
\begin{split}
	\sum_{n=1}^N \Norm{\R^d}{v_n}
	&\leq \sqrt{2} \sum_{n=1}^N \Norm{\R^d}{\pi(v_n)}
	= \sqrt{2} \Norm{\R^d}{\sum_{n=1}^N \pi(v_n)}
	\\
	&= \sqrt{2} \Norm{\R^d}{\pi\left( \sum_{n=1}^N v_n \right)}
	\leq \sqrt{2} \Norm{\R^d}{\sum_{n=1}^N v_n}\!\!\!.
\end{split}
\end{equation*}
By symmetry, the same holds for the other orthants.
\end{proof}

\begin{proof}[\PfAlt{Theorem \ref{wFMP_fin_dim}}]
(i)
The first part is Proposition \ref{1wFMP}.
The second part follows from Lemma \ref{lem_CR}, Lemma \ref{CR_to_pcont}, Proposition \ref{pcont_to_CR_inf_dim}, and Remark \ref{rem_CR_inf_dim}.

(ii)
This follows from Remark \ref{FMP_comparison}, Lemma \ref{lem_CR}, Lemma \ref{CR_to_pcont}, Proposition \ref{pcont_to_CR_inf_dim}, and Remark \ref{rem_CR_inf_dim}.

(iii)
It follows from Remark \ref{FMP_comparison} and Lemma \ref{CR_to_pcont} that a convolution representation satisfying $\pOPnorm{1}{\Seq{\kappa}} < \infty$ implies the $\infty$-weighted FMP, which in turn implies the minimal FMP and minimal continuity.
We know from Lemma \ref{lem_CR} that the minimal FMP and minimal continuity imply a convolution representation.
We show that if $H$ has a convolution representation, then it must satisfy $\pOPnorm{1}{\Seq{\kappa}} < \infty$.
Suppose this was not true.
Then, there exists a weighting sequence $\Seq{w}$ such that $\sum_{t \leq 0} \seq{w}{t} \OPnorm{\seq{\kappa}{t}} = \infty$.
For any $t \in \Z_-$, take $z_t \in \Zc$ with norm one and such that $\OPnorm{\seq{\kappa}{t}} \leq 2 \Norm{\Yc}{ \seq{\kappa}{t}(z_t) }$.
Let $\Seq{z} = \sum_{t \leq 0} \delta^t(\seq{w}{t} z_t) \in c_0(\Zc) \cap \B^{\infty}(\Zc)$.
Then, $\sum_{t \leq 0} \seq{w}{t} \OPnorm{\seq{\kappa}{t}} \leq 2 \sum_{t \leq 0} \Norm{\Yc}{\seq{\kappa}{t}(\seq{z}{t})} < \infty$ since the convolution representation is proper -- a contradiction.
Now, assume $\ZS \subseteq c_0(\Zc)$.
We know from Remark \ref{FMP_comparison} that the $\infty$-weighted FMP implies $\infty$-continuity and the latter implies the minimal FMP and minimal continuity, which we already showed to be equivalent to the $\infty$-weighted FMP.
\end{proof}

\begin{proof}[\PfAlt{Proposition \ref{productFMP_fin_dim}}]
	(i)
It is clear that finite memory implies the product FMP.
Conversely, we know from Remark \ref{FMP_comparison} and Lemma \ref{lem_CR}.(i) that the product FMP implies a formal convolution representation with some $\Seq{\kappa} \in L(\Zc,\Yc)^{\Z_-}$.
Suppose for contradiction $H$ has infinite memory.
Then, there exists a monotone sequence $(t_n)_n \subseteq \Z_-$ such that $\seq{\kappa}{t_n} \neq 0$ for all $n \in \N$.
For any $n \in \N$, take $z_n \in \Zc$ such that $\Norm{\Yc}{\seq{\kappa}{t_n}(z_n)} = 1$.
Consider $\Seq{z}^N = \sum_{1 \leq n \leq N} \delta^{t_n}(z_n)$.
By Lemma \ref{cone_lemma}, for any $N \in \N$, there exist $J_1^N,\dots,J_{2^d}^N \subseteq \Z_-$ such that
\begin{equation*}
\begin{split}
	1
	&= \frac{1}{N} \sum_{1 \leq n \leq N} \Norm{\Yc}{\seq{\kappa}{t_n}(z_n)}
	= \frac{1}{N} \sum_{t \leq 0} \Norm{\Yc}{\seq{\kappa}{t}(\seq{z}{t}^N)}
	\\
	&\leq \frac{c}{N} \sum_{i=1}^{2^d} \Norm{\Yc}{H\left( \sum_{t \in J_i^N} \delta^t(\seq{z}{t}^N) \right)}\!\!\!.
\end{split}
\end{equation*}
But $\frac{1}{N} \sum_{t \in J_i^N} \delta^t(\seq{z}{t}^N)$ converges to the zero sequence $\Seq{0}$ in the product topology.
By the product FMP, $H(\frac{1}{N} \sum_{t \in J_i^N} \delta^t(\seq{z}{t}^N)) \rightarrow H(\Seq{0}) = 0$, a contradiction.

(ii)
We need only show that the minimal FMP and minimal continuity imply finite memory.
We know from Lemma \ref{lem_CR} that $H$ has a convolution representation with some $\Seq{\kappa} \in L(\Zc,\Yc)^{\Z_-}$.
Let $J$ be the set of all $t \in \Z_-$ with $\seq{\kappa}{t} \neq 0$.
For any $t \in J$, take $z_t \in \Zc$ such that $\Norm{\Yc}{\seq{\kappa}{t}(z_t)} = 1$.
This time, consider $\Seq{z} = \sum_{t \in J} \delta^t(z_t)$.
Then, since the convolution representation is proper, $\# J = \sum_{t \in J} \Norm{\Yc}{\seq{\kappa}{t}(z_t)} = \sum_{t \leq 0} \Norm{\Yc}{\seq{\kappa}{t}(\seq{z}{t})} < \infty$.
\end{proof}

\begin{remark}
	Suppose $\Yc = \Zc = \R$.
Equip the space of linear functionals $\mathrm{span}(\ZS) \rightarrow \R$ with the weak$^*$ topology, and let $\Vc$ be the largest linear subspace thereof for which the coordinate projections form a Schauder basis.
By definition, a linear functional $\ZS \rightarrow \R$ has a formal convolution representation if and only if it is the restriction to $\ZS$ of an element in $\Vc$.
If $\Hc$ denotes the set of linear functionals $\ZS \rightarrow \R$ that have the minimal FMP and are minimally continuous, then Lemma \ref{lem_CR}.(i) states that $\Hc = \{ H|_{\ZS} \colon H \in \Vc \}$.
Lemma \ref{lem_CR}.(ii) states that the coordinate projections form an unconditional Schauder basis of $\Vc$.
Furthermore, if $\ZS = c_0(\Zc) \cap \ell^p(\R)$ for some $p \in [1,\infty]$, then it is well-known that $\ZS^* \subseteq \Vc$ \cite{LiQueff2017}.
In this case, we deduce from Theorem \ref{wFMP_fin_dim} the reverse inclusion $\Vc \subseteq \ZS^*$ and, hence, equality $\ZS^* = \Vc$.
\end{remark}


\section{Technical details for Section IV}

\begin{proof}[\PfAlt{Proposition \ref{RKHS_l2}}]
	By Lemma \ref{lem_CR}.(i) and Lemma \ref{CR_to_pcont}, the restriction of $H$ to $\B^2(\Zc)$ is 2-continuous and, hence, admits a unique 2-continuous linear extension $\tilde{H} \colon \ell^2(\ZS) \rightarrow \Yc$.
By Proposition \ref{RKHS}, $y \mapsto H_y = \ip{y}{H(\cdot)}_{\Yc}$ is an isomorphism $\Yc_0 \rightarrow \Hb$.
We show that the map $\phi \colon \Yc_0 \rightarrow \ell^2(\Zc)^*$ given by $y \mapsto \ip{y}{\tilde{H}(\cdot)}_{\Yc}$ is injective.
Suppose $\ip{y}{\tilde{H}(\cdot)}_{\Yc} = 0$.
Then, $H_y$ is constantly zero on $\B^2(\Zc)$.
The minimal FMP and minimal continuity of $H$ transfer to $H_y$.
By Lemma \ref{lem_CR}.(i), $H_y$ has a formal convolution representation $\Seq{\kappa}$.
This $\Seq{\kappa}$ must be trivial since it is uniquely determined by the restriction of $H_y$ to $\B^2(\Zc)$.
Thus, $H_y$ is constantly zero on all of $\ZS$, which can only be if $y=0$.
This shows injectivity of $\phi$.
Note that the orthogonal complement of $\tilde{H}(\ell^2(\Zc))$ in $\Yc_0$ belongs to the kernel of $\phi$ and, hence, is trivial.
Having a trivial orthogonal complement is equivalent to density of $\tilde{H}(\ell^2(\Zc))$ in $\Yc_0$.

Now, assume $\tilde{H}$ is orthogonal.
Then, $\tilde{H}(\ell^2(\Zc))$ is complete.
As it is also dense in $\Yc_0$, we must have $\tilde{H}(\ell^2(\Zc)) = \Yc_0$.
Given any $y_1,y_2 \in \Yc_0$, take $\Seq{z}^{y_1},\Seq{z}^{y_2} \in \ell^2(\Zc)$ with $\tilde{H}(\Seq{z}^{y_i}) = y_i$.
By orthogonality of $\tilde{H}$, $\phi(y_i) = \ip{\Seq{z}^{y_i}}{\cdot}_{\ell^2(\Zc)}$ and, hence,
\begin{equation*}
\begin{split}
	\ip{\phi(y_1)}{\phi(y_2)}_{\ell^2(\Zc)^*}
	&= \ip{\Seq{z}^{y_1}}{\Seq{z}^{y_2}}_{\ell^2(\Zc)}
	\\
	&= \ip{\tilde{H}(\Seq{z}^{y_1})}{\tilde{H}(\Seq{z}^{y_2})}_{\Yc}
	= \ip{y_1}{y_2}_{\Yc}.
\end{split}
\end{equation*}
This shows that $\phi$ is an isometry.
To see that $\phi$ is surjective, let $f \in \ell^2(\Zc)^*$.
By the Riesz representation theorem, there exists some $\Seq{z}^0 \in \ell^2(\Zc)$ such that $f = \ip{\Seq{z}^0}{\cdot}_{\ell^2(\Zc)}$.
Then, $f = \phi(\tilde{H}(\Seq{z}^0))$.
\end{proof}

\begin{corollary}
	Suppose $\ZS$ is a normed vector space that contains $\ell^2(\Zc)$ as a non-dense subset and such that $\delta^t \colon \Bc \rightarrow \ZS$ is continuous for all $t \in \Z_-$.
Then, there exists an element $\Seq{z} \in \ZS$ such that the truncated sequence $\sum_{t=T}^0 \delta^t(\seq{z}{t})$ does not converge to $\Seq{z}$ as $T \rightarrow -\infty$.
\end{corollary}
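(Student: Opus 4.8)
The plan is to argue by contradiction, exploiting the principle that a nonzero continuous linear functional which annihilates every finitely supported input cannot survive the limit of truncations. Suppose the conclusion fails, so that for every $\Seq{z} \in \ZS$ the truncated sequence $\sum_{t=T}^0 \delta^t(\seq{z}{t})$ converges to $\Seq{z}$ in the norm of $\ZS$ as $T \to -\infty$. In the language of the remark following the definition of the minimal FMP, this says precisely that the norm topology of $\ZS$ enjoys property (P).

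First I would produce a witnessing functional. Since $\ell^2(\Zc)$ is not dense in $\ZS$, its closure is a proper closed subspace, so the Hahn--Banach theorem furnishes a nonzero functional $H \in \ZS^*$ --- continuous with respect to the ambient norm of $\ZS$ --- that vanishes on $\overline{\ell^2(\Zc)}$, and in particular on $\ell^2(\Zc)$ itself. The decisive observation is then that each truncation $\sum_{t=T}^0 \delta^t(\seq{z}{t})$ is finitely supported and therefore lies in $\ell^2(\Zc) \subseteq \ZS$, so that $H$ annihilates every truncation. Feeding the assumed convergence through the continuous functional $H$ yields, for every $\Seq{z} \in \ZS$,
\begin{equation*}
	H(\Seq{z}) = \lim_{T \to -\infty} H\Bigl( \sum_{t=T}^0 \delta^t(\seq{z}{t}) \Bigr) = 0,
\end{equation*}
so $H \equiv 0$, contradicting the choice of $H$ as nonzero. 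This contradiction establishes the existence of some $\Seq{z} \in \ZS$ whose truncations fail to converge to it.

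I do not anticipate a serious obstacle here; the sole point that warrants care is bookkeeping of which topology governs each step. The functional supplied by Hahn--Banach is continuous exactly for the norm of $\ZS$, which is the same norm in which property (P) asserts convergence, and it is this alignment that lets the two be combined. Conceptually, $H$ plays the role of the Banach limit of \cref{ex_Banach_limit}: any nonzero linear functional that kills all finitely supported inputs automatically fails the minimal FMP, and the present statement simply transports that obstruction from one distinguished functional to the norm topology of $\ZS$ as a whole.
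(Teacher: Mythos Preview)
Your argument is correct. Both you and the paper begin identically: use Hahn--Banach to produce a nonzero continuous linear functional $H \colon \ZS \to \R$ that vanishes on $\ell^2(\Zc)$, then show that $H$ fails the minimal FMP, and finally use continuity of $H$ to transfer that failure back to the norm topology of $\ZS$. The difference lies in how the failure of the minimal FMP is established. The paper invokes \cref{RKHS_l2}, arguing that if $H$ had the minimal FMP then $\mathrm{span}(H(\B^2(\Zc)))$ would be dense in $\overline{H(\ZS)} = \R$, contradicting $H|_{\ell^2(\Zc)} = 0$. You instead observe directly that every truncation $\sum_{t=T}^0 \delta^t(\seq{z}{t})$ is finitely supported and hence lies in $\ell^2(\Zc)$, so $H$ kills it; continuity then forces $H \equiv 0$. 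Your route is more elementary and self-contained --- it requires nothing from the RKHS machinery of Section~4 --- whereas the paper's version is phrased as a corollary to showcase an application of \cref{RKHS_l2}. Your analogy with \cref{ex_Banach_limit} is apt and identifies the mechanism precisely.
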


\begin{proof}
	The Hahn-Banach theorem guarantees the existence of a non-trivial, continuous, linear functional $H \colon \ZS \rightarrow \R$ that vanishes on $\ell^2(\Zc)$.
In particular, $H(\ell^2(\Zc)) = 0$ is not dense in $H(\ZS) = \R$.
By Proposition \ref{RKHS_l2}, $H$ cannot have the minimal FMP.
That is, there exists an element $\Seq{z} \in \ZS$ such that $H(\sum_{t=T}^0 \delta^t(\seq{z}{t}))$ does not converge to $H(\Seq{z})$.
Since $H$ is continuous, it follows that $\sum_{t=T}^0 \delta^t(\seq{z}{t})$ cannot converge to $\Seq{z}$.
\end{proof}


\section{Functionals and filters}
\label{appendix_funfil}

We recall the equivalence between functionals and time-invariant filters \cite{BoydChua1985, RC9}.
In this section, $\Yc$ and $\Zc$ are simply sets.
Consider $\ZS^{\pm} \subseteq \Zc^{\Z}$ and $\ZS^- \subseteq \Zc^{\Z_-}$.
Let $\CTI$ and $\TI$ denote the set of all causal and time-invariant filters $\ZS^{\pm} \rightarrow \Yc^{\Z}$, respectively the set of all time-invariant filters $\ZS^- \rightarrow \Yc^{\Z_-}$ (which are automatically causal).
Let $\Func$ denote the set of all functionals $\ZS^- \rightarrow \Yc$.
For clarity, we distinguish between the shift $T \colon \Zc^{\Z} \rightarrow \Zc^{\Z}$ on bi-infinite sequences and the shift $T_- \colon \Zc^{\Z_-} \rightarrow \Zc^{\Z_-}$ on semi-infinite sequences.
Let $\tau_{\Yc} \colon \Yc^{\Z} \rightarrow \Yc^{\Z_-}$ and $\tau_{\Zc} \colon \Zc^{\Z} \rightarrow \Zc^{\Z_-}$ be the truncations, and let $p^t(\Seq{y}) = \seq{y}{t}$ be the projection onto the time entry $t$.

\begin{lemma}
	Suppose $T(\ZS^{\pm}) = \ZS^{\pm}$ and $\tau_{\Zc}(\ZS^{\pm}) = \ZS^-$.
Let $\iota \colon \ZS^- \rightarrow \ZS^{\pm}$ be any right-inverse of $\tau_{\Zc}$.
Then, there is a commutative diagram of bijections
\begin{equation*}
\begin{tikzcd}[row sep = 3em, column sep = 3em]
	\CTI \arrow{dr}[swap, sloped]{V \mapsto H_V} \arrow{rr}{V \mapsto U_V} \pgfmatrixnextcell \pgfmatrixnextcell \arrow{ll}{V_U \mapsot U} \TI \arrow{dl}[sloped]{H_U \mapsot U}
	\\
	\pgfmatrixnextcell \arrow{ul}[sloped]{V_H \mapsot H} \Func \arrow{ur}[swap, sloped]{H \mapsto U_H}
\end{tikzcd}
\end{equation*}
in which the maps are given (component-wise) by
\begin{equation*}
\begin{split}
	U_V
	&= \tau_{\Yc} \circ V \circ \iota,
	\\
	H_V
	&= p^0 \circ V \circ \iota,
	\\
	H_U
	&= p^0 \circ U,
\end{split}
\qquad
\begin{split}
	p^t \circ V_U
	&= p^0 \circ U \circ \tau_{\Zc} \circ T^{-t},
	\\
	p^t \circ V_H
	&= H \circ \tau_{\Zc} \circ T^{-t},
	\\
	p^t \circ U_H
	&= H \circ T_-^{-t}.
\end{split}
\end{equation*}
These bijections are independent of the choice of $\iota$.
\end{lemma}

\begin{proof}
	We have to verify that all maps are well-defined.
First, observe that $T_-^{-t}(\ZS^-) = \ZS^-$ for any $t \in \Z_-$, which follows from the equality $\tau_{\Zc} \circ T^{-t} = T_-^{-t} \circ \tau_{\Zc}$ together with the assumptions that $\ZS^- = \tau_{\Zc}(\ZS^{\pm})$ and $T(\ZS^{\pm}) = \ZS^{\pm}$.
Since $\Func$ contains all maps $\ZS^- \rightarrow \Yc$, it is clear that $V \mapsto H_V$ and $U \mapsto H_U$ are well-defined.
Note that a filter $U \colon \ZS^- \rightarrow \Yc^{\Z_-}$ is time-invariant if and only if $p^t \circ U \circ T_-^{-s} = p^{t+s} \circ U$ for any $t,s \in \Z_-$.
From this, it is easy to see that $H \mapsto U_H$ maps into $\TI$.
Analogously, a filter $V \colon \ZS^{\pm} \rightarrow \Yc^{\Z}$ is time-invariant if and only if $p^t \circ V \circ T^{-s} = p^{t+s} \circ V$ for any $t,s \in \Z$, from which it follows that $V_U$ and $V_H$ are time-invariant for any filter $U$ and any functional $H$.
Next, observe that a filter $V \colon \ZS^{\pm} \rightarrow \Yc^{\Z}$ is causal if and only if $p^t \circ V(\Seq{z}^1) = p^t \circ V(\Seq{z}^2)$ for any $t \in \Z$ and $\Seq{z}^1,\Seq{z}^2 \in \ZS^{\pm}$ that satisfy $\tau_{\Zc} \circ T^{-t}(\Seq{z}^1) = \tau_{\Zc} \circ T^{-t}(\Seq{z}^2)$.
From this, we find that $V_U$ and $V_H$ are also causal and, hence, that $U \mapsto V_U$ and $H \mapsto V_H$ map into $\CTI$.
It remains to show that $U_V$ is time-invariant for any $V \in \CTI$.
The causality and time-invariance of $V$ imply for any $s,t \in \Z_-$ that
\begin{equation*}
	p^t \circ V \circ \iota \circ T_-^{-s}
	= p^t \circ V \circ T^{-s} \circ \iota
	= p^{t+s} \circ V \circ \iota.
\end{equation*}
Clearly, $p^t \circ \tau_{\Yc} = p^t$ for any $t \in \Z_-$.
Thus,
\begin{equation*}
\begin{split}
	p^t \circ U_V \circ T_-^{-s}
	&= p^t \circ \tau_{\Yc} \circ V \circ \iota \circ T_-^{-s}
	\\
	&= p^t \circ V \circ \iota \circ T_-^{-s}
	= p^{t+s} \circ V \circ \iota
	\\
	&= p^{t+s} \circ \tau_{\Yc} \circ V \circ \iota
	= p^{t+s} \circ U_V,
\end{split}
\end{equation*}
which shows that $V \mapsto V_U$ maps into $\TI$.
We established that all maps are well-defined.
To show that the maps in the diagram are bijective, we verify that the formulas for maps in opposite directions in the diagram are indeed left- and right-inverses of each other.
The map $\iota$ may not be a left-inverse of $\tau_{\Zc}$, but the causality of an element $V \in \CTI$ implies that $p^0 \circ V \circ \iota \circ \tau_{\Zc} = p^0 \circ V$.
Thus, for any $t \in \Z$ and $s \in \Z_-$,
\begin{equation*}
\begin{split}
	p^t \circ V_{U_V}
	&= p^0 \circ U_V \circ \tau_{\Zc} \circ T^{-t}
	= p^0 \circ \tau_{\Yc} \circ V \circ \iota \circ \tau_{\Zc} \circ T^{-t}
	\\
	&= p^0 \circ V \circ T^{-t}
	= p^t \circ V,
	\\
	p^s \circ U_{V_U}
	&= p^s \circ \tau_{\Yc} \circ V_U \circ \iota
	= p^s \circ V_U \circ \iota
	\\
	&= p^0 \circ U \circ \tau_{\Zc} \circ T^{-s} \circ \iota
	= p^0 \circ U \circ T_-^{-s}
	= p^s \circ U.
\end{split}
\end{equation*}
Similarly, for any $t \in \Z$,
\begin{equation*}
\begin{split}
	p^t \circ V_{H_V}
	&= H_V \circ \tau_{\Zc} \circ T^{-t}
	= p^0 \circ V \circ \iota \circ \tau_{\Zc} \circ T^{-t}
	\\
	&= p^0 \circ V \circ T^{-t}
	= p^t \circ V,
	\\
	H_{V_H}
	&= p^0 \circ V_H \circ \iota
	= H \circ \tau_{\Zc} \circ T^0 \circ \iota
	= H.
\end{split}
\end{equation*}
Last but not least, for any $t \in \Z_-$,
\begin{equation*}
\begin{split}
	p^t \circ U_{H_U}
	&= H_U \circ T_-^{-t}
	= p^0 \circ U \circ T_-^{-t}
	= p^t \circ U,
	\\
	H_{U_H}
	&= p^0 \circ U_H
	= H \circ T_-^0
	= H.
\end{split}
\end{equation*}
This shows that all maps are bijections with inverses as claimed.
Since the maps $V_U$, $V_H$, and $U_H$ do not depend on the choice of $\iota$, neither do their inverses.
Commutativity of the diagram amounts to the chain of equalities
\begin{equation*}
	H_{U_V}
	= p^0 \circ U_V
	= p^0 \circ \tau_{\Yc} \circ V \circ \iota
	= p^0 \circ V \circ \iota
	= H_V.
\end{equation*}
\end{proof}

\begin{remark}
	Let $\Fc(\ZS^{\pm})$ and $\Fc(\ZS^-)$ be the set of all filters $\ZS^{\pm} \rightarrow \Yc^\Z$, respectively $\ZS^- \rightarrow \Yc^{\Z_-}$.
The filter $V_U$ as defined in the previous lemma is causal and time-invariant for any $U \in \Fc(\ZS^-)$ even if $U$ is not time-invariant, and the map $\TI \rightarrow \CTI$, $U \mapsto V_U$ extends to a map $\Fc(\ZS^-) \rightarrow \CTI$.
It is clear that $\CTI \rightarrow \Func$, $V \mapsto H_V$ also extends to a map $\Fc(\ZS^{\pm}) \rightarrow \Func$, and likewise for $U \mapsto H_U$.
Thus, we actually have a larger commutative diagram as follows.
\begin{equation*}
\begin{tikzcd}[row sep = 2em, column sep = 2em]
	\arrow[hook, bend right]{dd} \CTI \arrow[leftrightarrow]{rr} \pgfmatrixnextcell \pgfmatrixnextcell \TI \arrow[hook', bend left]{dd}
	\\
	\pgfmatrixnextcell \arrow[leftrightarrow]{ul} \Func \arrow[leftrightarrow]{ur}
	\\
	\Fc(\ZS^{\pm}) \arrow[two heads, bend right]{ur} \pgfmatrixnextcell \pgfmatrixnextcell \arrow[two heads, bend left, crossing over]{uull} \arrow[two heads]{ul} \Fc(\ZS^-)
\end{tikzcd}
\end{equation*}
The concatenations $\Fc(\ZS^{\pm}) \rightarrow \Fc(\ZS^{\pm})$, $V \mapsto V_{H_V}$ and $\Fc(\ZS^-) \rightarrow \Fc(\ZS^-)$, $U \mapsto U_{H_U}$ are idempotent maps and define natural projections $\Fc(\ZS^{\pm}) \rightarrow \CTI$ and $\Fc(\ZS^-) \rightarrow \TI$ from the set of all filters onto the set of (causal) time-invariant ones.
\end{remark}

\begin{remark}
	It should be noted that, in general, it is \textit{not} true that a filter is continuous if and only if its associated functional is continuous.
This hinges on the choice of topology on the codomain of the filter.
If the codomain is endowed with the product topology, then a filter is indeed continuous if and only if its associated functional is continuous.
For other topologies on the codomain, this becomes a subtle issue \cite{RC7,RC9}.
\end{remark}


\section{Continuous-time linear functionals}
\label{appendix_cont}

In this appendix, we make precise a claim made in \cref{Conclusion}.
Let $\Ib \subseteq \R$ be an interval with non-empty interior, and let $C_c^{\infty}(\Ib,\R^d)$ be the set of smooth and compactly supported functions $\Ib \rightarrow \R^d$.
Recall that the topology of point-wise convergence on $C_c^{\infty}(\Ib,\R^d)$ is the subspace topology induced by the product topology on $(\R^d)^{\Ib}$.

\begin{proposition}
\label{cont_time_conv_repr}
	Let $\kappa \colon \Ib \rightarrow \R^d$ be locally integrable, and consider the linear functional $H \colon C_c^{\infty}(\Ib,\R^d) \rightarrow \R$,
\begin{equation*}
	H(f)
	= \int_{\Ib} \kappa(t)^T f(t) \, dt.
\end{equation*}
Suppose $H$ is continuous with respect to the topology of point-wise convergence.
Then, $\kappa$ vanishes Lebesgue-almost everywhere and, hence, $H$ is constantly zero.
\end{proposition}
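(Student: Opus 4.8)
The plan is to exploit the weakness of the point-wise topology: continuity of a linear functional with respect to it forces $H$ to be a finite linear combination of point evaluations, and such a sum of point masses cannot coincide with an integral against a locally integrable density unless that density vanishes almost everywhere.

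First I would use that the topology of point-wise convergence is a vector topology (it is the subspace topology from the product $(\R^d)^{\Ib}$), so the linear map $H$ is continuous if and only if it is continuous at the origin, and that this topology is the initial topology induced by the evaluations $\mathrm{ev}_t \colon f \mapsto f(t) \in \R^d$, $t \in \Ib$, equivalently by the scalar functionals $L_{t,j} \colon f \mapsto f_j(t)$ with $j \in \{1,\dots,d\}$. Continuity at $0$ yields finitely many points $t_1,\dots,t_n \in \Ib$ and a $\delta>0$ such that $\abs{H(f)} < 1$ whenever $\|f(t_i)\| < \delta$ for all $i$. For any $f$ in the common kernel $W = \bigcap_{i,j} \ker(L_{t_i,j})$ and any scalar $\lambda$, the function $\lambda f$ lies in this basic neighborhood, so $\abs{\lambda}\,\abs{H(f)} < 1$ for every $\lambda$, which forces $H(f)=0$. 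Thus $H$ annihilates the common kernel of the finitely many functionals $L_{t_i,j}$, and the standard algebraic lemma (a linear functional vanishing on the intersection of the kernels of finitely many linear functionals lies in their span) gives vectors $c_1,\dots,c_n \in \R^d$ with
\[
	H(f) = \sum_{i=1}^n c_i^{T} f(t_i) \qquad \text{for all } f \in C_c^{\infty}(\Ib,\R^d).
\]

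Next I would equate this with the integral formula for $H$ and specialize to test functions with a single nonzero coordinate, reducing to the scalar identity: for each $j$,
\[
	\int_{\Ib} \kappa_j(t)\phi(t)\,dt = \sum_{i=1}^n c_{i,j}\,\phi(t_i) \qquad \text{for all } \phi \in C_c^{\infty}(\Ib,\R).
\]
Restricting $\phi$ to have support in the open set $\Ib \setminus \{t_1,\dots,t_n\}$ makes the right-hand side vanish, so $\int_{\Ib} \kappa_j \phi = 0$ for every such $\phi$. By the fundamental lemma of the calculus of variations, $\kappa_j = 0$ almost everywhere on $\Ib \setminus \{t_1,\dots,t_n\}$; since the excluded set is finite and hence Lebesgue-null, $\kappa_j = 0$ almost everywhere on $\Ib$. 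As this holds for every $j$, we obtain $\kappa = 0$ almost everywhere, whence $H(f) = \int_{\Ib} \kappa^{T} f = 0$ identically.

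The routine steps are the reduction to scalar components and the du~Bois-Reymond argument. The step carrying the real content, and where I would be most careful, is showing that point-wise continuity forces the finite-combination representation: it rests on identifying the point-wise topology as a weak topology and invoking the span lemma, and one must handle the vector-valued evaluations by passing to the scalar functionals $L_{t,j}$. I note that once $\kappa = 0$ is established, the representation as a sum of point masses automatically forces all $c_i = 0$ as well (though this is not needed for the conclusion); this incompatibility between Dirac masses and an absolutely continuous density is the conceptual heart of the argument, and it mirrors the continuous-time obstruction discussed in the Conclusions.
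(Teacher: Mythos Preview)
Your proof is correct and takes a genuinely different route from the paper's.

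The paper argues by contradiction and explicit construction: assuming $\kappa$ is nonzero on a set of positive measure, it picks a Lebesgue-type point $t_0$ where the mollified kernel $\phi_\epsilon * \kappa$ stays bounded away from zero, and then manufactures test functions $f_\epsilon \in C_c^\infty(\Ib,\R^d)$ whose supports shrink to $\{t_0\}$, so that $f_\epsilon \to 0$ pointwise, while a Fubini/mollifier computation gives $H(f_\epsilon) \geq \eta^2 - \epsilon$. This directly contradicts continuity in the pointwise topology.

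Your argument instead exploits the abstract structure of the pointwise topology as a weak topology: continuity at the origin forces $H$ to vanish on the common kernel of finitely many evaluation functionals, and the span lemma then yields $H(f) = \sum_i c_i^T f(t_i)$. The du~Bois--Reymond lemma finishes the job by showing that an $L^1_{\mathrm{loc}}$ density cannot represent a finite sum of Dirac masses unless it vanishes almost everywhere. This is shorter and more conceptual; it makes transparent exactly \emph{why} the result holds---pointwise-continuous linear functionals on any function space are finitely supported, and finitely supported measures are singular with respect to Lebesgue measure---whereas the paper's proof trades this structural insight for a self-contained analytic construction that produces explicit witnesses to the failure of continuity. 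Both are valid; yours is arguably the cleaner path.
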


\begin{proof}
	Suppose for contradiction that $\kappa$ is non-zero on a set of positive Lebesgue measure.
Consider the family of standard symmetric mollifiers $(\phi_{\epsilon})_{\epsilon \in (0,1)} \subseteq C_c^{\infty}(\R,\R)$, and recall that the support of each $\phi_{\epsilon}$ is contained in $[-\epsilon,+\epsilon]$.
Let $\norm{\cdot}$ be the Euclidean norm.
Since the convolution $\phi_{\epsilon} * \kappa$ converges point-wise Lebesgue-almost everywhere to $\kappa$ as $\epsilon \rightarrow 0$, there exists a point $t_0$ in the interior of $\Ib$ and constants $\epsilon_0,\eta > 0$ such that $\norm{\phi_{\epsilon} * \kappa(t_0)} > \eta$ for all $\epsilon \in (0,\epsilon_0)$.
Since $\phi_{\epsilon} * \kappa$ is continuous, for all $\epsilon \in (0,\epsilon_0)$ there exists some $\delta_{\epsilon} \in (0,\epsilon)$ such that $\norm{\phi_{\epsilon} * \kappa(t)} > \eta$ for all $t \in (t_0 - 3\delta_{\epsilon},t_0)$.
Let $\tau_{\epsilon} \in C_c^{\infty}(\R,[0,1])$ be a bump function that is 1 in $(t_0 - 2\delta_{\epsilon},t_0 - \delta_{\epsilon})$ and is zero outside of $(t_0 - 3\delta_{\epsilon},t_0)$.
Define $h_{\epsilon} \in C_c^{\infty}(\R,\R^d)$ by $h_{\epsilon}(t) = (\delta_{\epsilon})^{-1} \tau_{\epsilon}(t) (\phi_{\epsilon} * \kappa)(t)$, and set $g_{\epsilon} = \phi_{\epsilon} * h_{\epsilon} \in C_c^{\infty}(\R,\R^d)$.
The support of $g_{\epsilon}$ is contained in $[t_0-4{\epsilon},t_0+{\epsilon}]$.
In particular, since $t_0$ is an interior point of $\Ib$, we have $g_{\epsilon} \in C_c(\Ib,\R^d)$ for sufficiently small $\epsilon$.
Let us compute $H(g_{\epsilon})$.
By Fubini's theorem and by symmetry of the mollifiers, that is, $\phi_{\epsilon}(t-s) = \phi_{\epsilon}(s-t)$, we have
\begin{equation*}
\begin{split}
	H(g_{\epsilon})
	&= \int_{\Ib} \kappa(t)^T (\phi_{\epsilon} * h_{\epsilon})(t) \, dt
	\\
	&= \int_{\Ib} \int_{\R} \kappa(t)^T \phi_{\epsilon}(s-t) h_{\epsilon}(s) \, ds \, dt
	\\
	&= \int_{\R} (\phi_{\epsilon} * \kappa)(s)^T h_{\epsilon}(s) \, ds.
\end{split}
\end{equation*}
Unravelling the definitions, we find
\begin{equation*}
\begin{split}
	H(g_{\epsilon})
	&= \int_{\R} (\phi_{\epsilon} * \kappa)(s)^T h_{\epsilon}(s) \, ds
	\\
	&= \frac{1}{\delta_{\epsilon}} \int_{t_0-3\delta_{\epsilon}}^{t_0} \tau_{\epsilon}(s) \norm{ (\phi_{\epsilon} * \kappa)(s)}^2 ds
	\\
	&\geq \frac{\eta^2}{\delta_{\epsilon}} \int_{t_0-3\delta_{\epsilon}}^{t_0} \tau_{\epsilon}(s) \, ds
	\geq \eta^2.
\end{split}
\end{equation*}
Next, for any $\epsilon \in (0,\epsilon_0)$, take $r_{\epsilon} \in (0,\epsilon)$ so small that
\begin{equation*}
	\int_{t_0-2r_{\epsilon}}^{t_0+2r_{\epsilon}} \abs{ \kappa(t)^T g_{\epsilon}(t) } dt
	\leq \epsilon.
\end{equation*}
Let $\hat{\tau}_{\epsilon} \in C_c^{\infty}(\R,[0,1])$ be a bump function that is 1 in $(t_0-r_{\epsilon},t_0+r_{\epsilon})$ and is zero outside of $(t_0-2r_{\epsilon},t_0+2r_{\epsilon})$.
Then,
\begin{equation*}
\begin{split}
	\abs{ H(\hat{\tau}_{\epsilon} g_{\epsilon}) }
	&\leq \int_{\Ib} \hat{\tau}_{\epsilon}(t) \abs{ \kappa(t)^T g_{\epsilon}(t)  } dt
	\\
	&\leq \int_{t_0-2r_{\epsilon}}^{t_0+2r_{\epsilon}} \abs{ \kappa(t)^T g_{\epsilon}(t) } dt
	\leq \epsilon.
\end{split}
\end{equation*}
Finally, set $f_{\epsilon} = (1 - \hat{\tau}_{\epsilon}) g_{\epsilon} \in C_c^{\infty}(\R,\R^d)$.
Clearly, $f_{\epsilon}(t_0) = 0$.
That the support of $f_{\epsilon}$ is contained in $[t_0-4{\epsilon},t_0+{\epsilon}]$ implies that $(f_{\epsilon}(t))_{\epsilon}$ is eventually zero for any $t \neq t_0$.
Thus, $H(f_{\epsilon}) \rightarrow 0$ by continuity of $H$ with respect to the topology of point-wise convergence.
This contradicts $H(f_{\epsilon}) = H(g_{\epsilon}) - H(\hat{\tau}_{\epsilon} g_{\epsilon}) \geq \eta^2 - \epsilon$.
\end{proof}

Let $C_0(\Ib,\R^d)$ denote the set of continuous functions vanishing at infinity.
Consider the set $\Rc$ of linear functionals $H \colon C_0(\Ib,\R^d) \rightarrow \R$ that satisfy $H(f_n) \rightarrow 0$ for any sequence $(f_n)_n \subseteq C_0(\Ib,\R^d)$ that converges to zero point-wise Lebesgue-almost everywhere and is uniformly bounded in the sense that $\sup_n \sup_{t \in \Ib} \Norm{\R^d}{f_n(t)} < \infty$.
Note that any element in $\Rc$ is continuous with respect to the supremums-norm.
Thus, $\Rc$ is a subset of the dual space $C_0(\Ib,\R^d)^*$.
It can be shown that $\Rc$ is exactly the set of functionals one obtains by embedding $L^1(\Ib,\R^d)$ in the dual $C_0(\Ib,\R^d)^*$ \cite{LiHanELi2020, LiHanELi2022}.
Proposition \ref{cont_time_conv_repr} implies that the condition $\sup_n \sup_{t \in \Ib} \Norm{\R^d}{f_n(t)} < \infty$ is crucial in the property above.
Indeed, suppose $H \colon C_0(\Ib,\R^d) \rightarrow \R$ is a linear functional that satisfies $H(f_n) \rightarrow 0$ for any sequence $(f_n)_n \subseteq C_0(\Ib,\R^d)$ that converges to zero point-wise Lebesgue-almost everywhere but is permitted to be unbounded.
Then, $H \in \Rc \cong L^1(\Ib,\R^d)$ admits an integral representation as required in Proposition \ref{cont_time_conv_repr}, and it is continuous with respect to the topology of point-wise convergence.
Hence, $H$ is constantly zero.
We remark that the property specifying elements in $\Rc$ cannot be defined as continuity of the functional with respect to a topology.

\bib{acm}{bibfile_FR}

\end{document}